\newtheorem{theorem}{Theorem}[section]
\newtheorem{lemma}[theorem]{Lemma}
\newtheorem{proposition}[theorem]{Proposition}
\theoremstyle{plain}
\theoremstyle{definition}
\newtheorem{definition}[theorem]{Definition}
\newtheorem{remark}[theorem]{Remark}
\numberwithin{equation}{section}
\renewcommand{\labelenumi}{\textup{(\theenumi)}}
\title{One-sided topological conjugacy 
of topological Markov shifts,
continuous full groups and Cuntz--Krieger algebras
%(Characterization4.tex)
}
\author{Kengo Matsumoto \\
Department of Mathematics \\
Joetsu University of Education \\
Joetsu, 943-8512, Japan
}
\begin{document}

\date{}

\maketitle

\def\det{{{\operatorname{det}}}}

%\maketitle
\begin{abstract}
We will characterize topological conjugate one-sided topological Markov shifts in
terms of their subgroups of continuous full groups and subalgebras of Cuntz--Krieger algebras. 
%We will study several subgroups of continuous full groups of one-sided topological Markov shifts
%$(X_A,\sigma_A)$
%from the view points of cohomology groups of full group actions on the shift spaces.
%We may study continuous orbit equivalence and strongly continuous orbit equivalence in terms of these
% subgroups of the continuous full groups and the cohomology groups.
\end{abstract}

{\it Mathematics Subject Classification}:
 Primary 37A55; Secondary  37B10, 46L35.

{\it Keywords and phrases}:
Topological Markov shifts, continuous orbit equivalence,  Cuntz--Krieger algebras, 
topological conjugacy.

\newcommand{\Ker}{\operatorname{Ker}}
\newcommand{\sgn}{\operatorname{sgn}}
\newcommand{\Ad}{\operatorname{Ad}}
\newcommand{\ad}{\operatorname{ad}}
\newcommand{\orb}{\operatorname{orb}}

\def\Re{{\operatorname{Re}}}
\def\det{{{\operatorname{det}}}}
\newcommand{\K}{\mathbb{K}}

\newcommand{\N}{\mathbb{N}}
\newcommand{\C}{\mathbb{C}}
\newcommand{\R}{\mathbb{R}}
\newcommand{\Rp}{{\mathbb{R}}^*_+}
\newcommand{\T}{\mathbb{T}}
\newcommand{\Z}{\mathbb{Z}}
\newcommand{\Zp}{{\mathbb{Z}}_+}
\def\AF{{{\operatorname{AF}}}}

\def\OA{{{\mathcal{O}}_A}}
\def\OAf{{{\mathcal{O}}_{A_f}}}
\def\OB{{{\mathcal{O}}_B}}
\def\OTA{{{\mathcal{O}}_{\tilde{A}}}}
\def\SOA{{{\mathcal{O}}_A}\otimes{\mathcal{K}}}
\def\SOB{{{\mathcal{O}}_B}\otimes{\mathcal{K}}}
\def\SOTA{{{\mathcal{O}}_{\tilde{A}}\otimes{\mathcal{K}}}}
\def\F{{\mathcal{F}}}
\def\FA{{{\mathcal{F}}_A}}
\def\PA{{{\mathcal{P}}_A}}
\def\FAf{{{\mathcal{F}}_{A,f}}}
\def\FAg{{{\mathcal{F}}_{A,g}}}
\def\FBg{{{\mathcal{F}}_{B,g}}}
\def\FAb{{{\mathcal{F}}_A^b}}
\def\FB{{{\mathcal{F}}_B}}
\def\DA{{{\mathcal{D}}_A}}
\def\DB{{{\mathcal{D}}_B}}
\def\DZ{{{\mathcal{D}}_Z}}
\def\DTA{{{\mathcal{D}}_{\tilde{A}}}}
\def\Ext{{{\operatorname{Ext}}}}
\def\Max{{{\operatorname{Max}}}}
\def\Per{{{\operatorname{Per}}}}
\def\PerB{{{\operatorname{PerB}}}}
\def\Homeo{{{\operatorname{Homeo}}}}
\def\HomeoCOE{{{\operatorname{H}_{\operatorname{coe}}}}}
\def\HA{{{\frak H}_A}}
\def\HB{{{\frak H}_B}}
\def\HSA{{H_{\sigma_A}(X_A)}}
\def\Out{{{\operatorname{Out}}}}
\def\Aut{{{\operatorname{Aut}}}}
\def\Ad{{{\operatorname{Ad}}}}
\def\Inn{{{\operatorname{Inn}}}}
\def\det{{{\operatorname{det}}}}
\def\exp{{{\operatorname{exp}}}}
\def\nep{{{\operatorname{nep}}}}
\def\sgn{{{\operatorname{sign}}}}
\def\cobdy{{{\operatorname{cobdy}}}}
\def\Ker{{{\operatorname{Ker}}}}
\def\ind{{{\operatorname{ind}}}}
\def\id{{{\operatorname{id}}}}
\def\supp{{{\operatorname{supp}}}}
\def\co{{{\operatorname{co}}}}
\def\scoe{{{\operatorname{scoe}}}}
\def\coe{{{\operatorname{coe}}}}
\def\I{{\mathcal{I}}}

\def\S{\mathcal{S}}

\def\tS{\tilde{S}}
%%%%%%%%%%%%%%%%%%%%%%%%%%%%%%%%%%%
\def\coe{{{\operatorname{coe}}}}
\def\scoe{{{\operatorname{scoe}}}}
\def\uoe{{{\operatorname{uoe}}}}
\def\ucoe{{{\operatorname{ucoe}}}}
\def\event{{{\operatorname{event}}}}
%%%%%%%%%%%%%%%%%%%%%%%%%%%
%%%%%%%%%%%%%%%%%%%%%%%%%%%%%%%%%%%%%%%

%%%%%%%%%%%%%%%%%%%%%%%%%%%%%%%%%%%%%%%%%%
\section{Introduction}
%%%%%%%%%%%%%%%%%%%%%%%%%%%%%%%%%%%%%%

The notion of continuous orbit equivalence between one-sided topological Markov shifts
is closely related to classification of not only Cuntz--Krieger algebras
but also a certain class of countable discrete non amenable groups called 
continuous full groups.
In this paper, we will further study 
close relations among topological conjugacy of one-sided topological Markov shifts,
subalgebras of Cuntz--Krieger algebras, subgroups of continuous full groups.
For an irreducible, non permutation matrix $A = [A(i,j)]_{i,j=1}^N$,
the one-sided topological Markov shift for the matrix $A$
 is defined by a topological dynamical system $(X_A, \sigma_A)$
 with its shift space 
 $X_A =\{ (x_n)_{n \in \N} \in \{1,\dots, N\}^\N
\mid A(x_n, x_{n+1}) =1, n \in \N\}$
 and its transformation
 $\sigma_A: X_A\longrightarrow X_A$ 
satisfying $\sigma_A((x_n)_{n\in \N}) = (x_{n+1})_{n \in \N},$
where the shift space $X_A$ is endowed with the relative topology of the infinite product topology
of $\{1,\dots, N\}^\N,$
so that $\sigma_A$ is a continuous surjection on the compact Hausdorff space $X_A$.
Two topological Markov shifts 
$(X_A,\sigma_A)$ and $(X_B,\sigma_B)$ are said to be {\it topologically conjugate}\/ if 
there exists a homeomorphism
$h: X_A\longrightarrow X_B$ such that $h \circ \sigma_A = \sigma_B \circ h$.
Generalizing the condition $h \circ \sigma_A = \sigma_B \circ h$,
we reach the definition of {\it continuous orbit equibalence}\/
between $(X_A,\sigma_A)$ and $(X_B,\sigma_B)$ (\cite{MaPacific})
that describes that 
there exist continuous functions
$k_1, l_1:X_A\longrightarrow \Zp$
and
$k_2, l_2:X_B\longrightarrow \Zp$
such that 
\begin{equation}\label{eq:coe12}
\begin{cases}
\sigma_B^{k_1(x)}(h(\sigma_A(x))) 
\thinspace & = \quad \sigma_B^{l_1(x)}(h(x)), \qquad x \in X_A,  \\
\sigma_A^{k_2(w)}(h^{-1}(\sigma_B(w))) 
\thinspace & = \quad \sigma_A^{l_2(w)}(h^{-1}(w)), \qquad w \in X_B,  
\end{cases}
\end{equation}
where $\Zp$ stands for the set of nonnegative integers.
If $k_1 \equiv 0, \, l_1 \equiv 1,\, k_2 \equiv 0, \, l_2 \equiv 1,$
the continuous orbit equivalence reduces to topological conjugacy.

The continuous full group $\Gamma_A$ for the matrix $A$ 
was introduced in \cite{MaPacific}
to study continuous orbit equivalence.
It is defined by the group of homeomorphisms $\tau$ on $X_A$
such that there exist continuous functions $k_\tau, l_\tau: X_A\longrightarrow \Zp$
satisfying 
\begin{equation} \label{eq:tau}
\sigma_A^{k_\tau(z)}(\tau(z)) = \sigma_A^{l_\tau(z)}(z), \qquad z \in X_A. 
\end{equation}
The group  was written as $[\sigma_A]_c$ in the earlier papers
\cite{MaPacific}, \cite{MaDCDS2013}.
It is a countable discrete non amenable group 
(see \cite{MaDCDS2013}, \cite{MMGGD}, \cite{MatuiCrelle}, etc.).
H. Matui studied such kinds of groups for more general setting 
in terms of \'etale groupoids (\cite{MatuiPLMS}, \cite{MatuiCrelle}).

The Cuntz--Krieger algebra for the matrix $A$
is defined to be the $C^*$-algebra $\OA$ generated by 
partial isometries $S_1,\dots, S_N$ satisfying the operator relations
$\sum_{j=1}^N S_j S_j^* =1,\, S_i^* S_i = \sum_{j=1}^N A(i,j) S_j S_j^*, i=1,\dots,N$
(\cite{CK}).
The $C^*$-algebra $\OA$ is simple, purely infinite 
and has an action $\rho^A$ of the circle group $\R/ \Z =\T$ defined by 
$\rho^A_t(S_j) = \exp(2 \pi \sqrt{-1} t) S_j$, 
$t \in  \T$.
The action is called the (standard) gauge action. 
Its fixed point algebra $\OA^{\rho^A}$ is known as an AF-algebra written 
$\F_A$. 
The original shift space $X_A$ is realized in $\OA$ as the commutative $C^*$-algebra
$C(X_A)$ of complex valued continuous functions on $X_A$ 
that is canonically isomorphic to a commutative $C^*$-subalgebra
$\DA$ of $\F_A$ generated by the projections of the form
$S_{\mu_1}\cdots S_{\mu_m}S_{\mu_m}^*\cdots S_{\mu_1}^*, 
(\mu_1,\dots,\mu_m) \in B_m(X_A)$
through the identification between 
$S_{\mu_1}\cdots S_{\mu_m}S_{\mu_m}^*\cdots S_{\mu_1}^* \in \DA$
and
$\chi_{U_\mu},$
where
$B_m(X_A)$ denotes the set of admissible words 
$\{(x_1,\dots, x_m) \in \{1,\dots, N\}^m \mid (x_n)_{n \in \N} \in X_A\}$
of length $m$
and 
$\chi_{U_\mu}$
is the characteristic function on $X_A$
of the cylinder set 
$U_\mu = \{ (x_n)_{n\in \N} \in X_A \mid x_1 = \mu_1,\dots, x_m = \mu_m\}$
for the word $\mu = (\mu_1,\dots,\mu_m).$
It is well known that the commutative $C^*$-subalgebra $\DA$ is a regular 
maximal abelian $C^*$-subalgebra of $\OA$ (\cite{CK}).

The following theorem proved in \cite{MaIsrael2015} shows a close relationship 
among the above three objects.

\begin{theorem}[{\cite[Corollary 1.2]{MaIsrael2015}, cf. \cite{MaPacific}, \cite{MMKyoto}, 
\cite{MatuiCrelle}}]
Let $A, B$ be irreducible non permutation matrices with entries in $\{0,1\}$.
The following assertions are mutually equivalent.
\begin{enumerate}
\renewcommand{\theenumi}{\roman{enumi}}
\renewcommand{\labelenumi}{\textup{(\theenumi)}}
\item The one-sided topological Markov shifts 
$(X_A,\sigma_A)$ and $(X_B,\sigma_B)$ are continuous orbit equivalent. 
\item There exists an isomorphism $\Phi:\OA\longrightarrow \OB$ of $C^*$-algebras 
satisfying  
$\Phi(\DA) = \DB$.
\item There exists an isomorphism
$\xi: \Gamma_A\longrightarrow \Gamma_B$ of groups.
\end{enumerate}
\end{theorem}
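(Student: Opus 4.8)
The plan is to route all three equivalences through a single statement about the étale groupoids of the shifts. For an irreducible non permutation $\{0,1\}$-matrix $A$, let $G_A$ denote the Deaconu--Renault groupoid
\[
G_A = \{\, (x, m-n, y) \in X_A \times \Z \times X_A \mid m, n \in \Zp,\ \sigma_A^m(x) = \sigma_A^n(y) \,\}
\]
with its usual étale topology; because $A$ is irreducible and not a permutation, $G_A$ is a second countable, minimal, topologically principal étale groupoid whose unit space $G_A^{(0)}$ is canonically identified with $X_A$. Two standard facts are needed. First, it is well known that $\OA$ is canonically isomorphic to the groupoid $C^*$-algebra $C^*(G_A)$ in such a way that $\DA$ corresponds to $C_0(G_A^{(0)}) = C(X_A)$, and under this identification $\DA$ is a Cartan subalgebra of $\OA$ in Renault's sense, with trivial associated twist. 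Second, sending $\tau \in \Gamma_A$, with defining functions $k_\tau, l_\tau$ as in \eqref{eq:tau}, to the compact open set $V_\tau := \{\, (\tau(z), k_\tau(z) - l_\tau(z), z) \mid z \in X_A \,\}$ gives a full bisection of $G_A$, and $\tau \mapsto V_\tau$ is an isomorphism of $\Gamma_A$ onto the topological full group $[[G_A]]$. The theorem then reduces to showing that each of (i), (ii) and (iii) is equivalent to the assertion (iv) that $G_A$ and $G_B$ are isomorphic as topological groupoids.

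For (i)\,$\Leftrightarrow$\,(iv): given continuous orbit equivalence data $h, k_i, l_i$ as in \eqref{eq:coe12}, iterating the first relation produces, for each $m \geq 0$, continuous $\Zp$-valued functions transporting $m$ shift-steps in $X_A$ to a comparable number of shift-steps of $h(\cdot)$ in $X_B$; combining this with the analogous data for $h^{-1}$ along the orbit relation yields a well-defined map $\psi(x, m-n, y) = (h(x), c(x, m-n, y), h(y))$, and one checks that $\psi$ is a continuous, open groupoid isomorphism $G_A \to G_B$ whose inverse is built in the same way from $h^{-1}$. Conversely, a topological groupoid isomorphism $\psi \colon G_A \to G_B$ restricts on units to a homeomorphism $h := \psi|_{X_A}$, and applying $\psi$ to the compact open bisection $\{\,(x, 1, \sigma_A(x)) \mid x \in X_A\,\}$ and reading off degrees in $G_B$ produces continuous functions satisfying the first equation of \eqref{eq:coe12}; symmetrically from $\psi^{-1}$. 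For (ii)\,$\Leftrightarrow$\,(iv): under the first identification above, an isomorphism $\Phi \colon \OA \to \OB$ with $\Phi(\DA) = \DB$ is exactly an isomorphism of the Cartan pairs $(C^*(G_A), C_0(G_A^{(0)}))$ and $(C^*(G_B), C_0(G_B^{(0)}))$; since the twists are trivial, Renault's reconstruction theorem for Cartan subalgebras turns this into a topological groupoid isomorphism $G_A \cong G_B$, and conversely any such groupoid isomorphism induces a diagonal-preserving $C^*$-isomorphism. (This is essentially the route of \cite{MaPacific}, \cite{MMKyoto}.)

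For (iii)\,$\Leftrightarrow$\,(iv): if $G_A \cong G_B$, then via the second identification $\Gamma_A \cong [[G_A]] \cong [[G_B]] \cong \Gamma_B$. The reverse implication is the essential point and the step I expect to be the main obstacle, since it requires extracting a spatial, and in fact groupoid-level, isomorphism from a bare abstract group isomorphism of the continuous full groups. An isomorphism $\xi \colon \Gamma_A \to \Gamma_B$ automatically carries the commutator subgroup $D(\Gamma_A)$ onto $D(\Gamma_B)$. One then uses that $D(\Gamma_A)$ is simple and that its action on the Cantor set $X_A$ is minimal and \emph{locally moving} --- it contains homeomorphisms supported in, and fully supported on, arbitrarily small clopen sets --- so that a reconstruction theorem of Rubin type applies: the restriction of $\xi$ to $D(\Gamma_A)$ is spatially implemented by a homeomorphism $h \colon X_A \to X_B$ conjugating the two $D(\Gamma_A)$-actions, hence the two $\Gamma_A$-actions. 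Since $G_A$ is recovered as the groupoid of germs of the $[[G_A]]$-action on $X_A$ (and likewise for $B$), $h$ then promotes to a topological groupoid isomorphism $G_A \cong G_B$. The two load-bearing ingredients here --- the simplicity of $D(\Gamma_A)$ together with the abundance of ``small'' elements in it, and the verification that the $D(\Gamma_A)$-action on $X_A$ meets the hypotheses of Rubin's reconstruction theorem --- are exactly the technical heart of \cite{MatuiCrelle}; by contrast the equivalences (i)\,$\Leftrightarrow$\,(iv) and (ii)\,$\Leftrightarrow$\,(iv) are essentially formal once the Deaconu--Renault model and Renault's Cartan-pair machinery are available.
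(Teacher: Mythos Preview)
The paper does not prove this theorem; it is quoted as a known background result with attribution to \cite{MaIsrael2015}, \cite{MaPacific}, \cite{MMKyoto}, \cite{MatuiCrelle}, so there is no ``paper's own proof'' to compare against. Your groupoid route is correct and is essentially the synthesis of those references: (i)\,$\Leftrightarrow$\,(iv) and (ii)\,$\Leftrightarrow$\,(iv) are the content of \cite{MMKyoto} (and, via Renault's Cartan reconstruction, \cite{Renault2008}), while (iii)\,$\Leftrightarrow$\,(iv) is Matui's isomorphism theorem in \cite{MatuiCrelle}. It is worth noting that the original argument for (i)\,$\Leftrightarrow$\,(ii) in \cite{MaPacific} did not pass through groupoids or Renault's theorem at all: it worked directly with the unitary normalizer $U(\OA,\DA)$ and the explicit form $\sum_i S_{\mu(i)}S_{\nu(i)}^*$ of its elements (as in Lemma~\ref{lem:fullunitary} above), building the isomorphism $\Phi$ by hand from the orbit-equivalence data. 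Your approach is cleaner and more conceptual; the direct approach has the advantage of being self-contained and of making the correspondence $u_\tau \leftrightarrow \tau$ completely explicit, which is what the present paper actually uses in Sections~2--3.

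One small point in your (iii)\,$\Rightarrow$\,(iv): after Rubin gives a homeomorphism $h$ implementing $\xi|_{D(\Gamma_A)}$, the clause ``hence the two $\Gamma_A$-actions'' is not automatic. You need that the centralizer of $D(\Gamma_A)$ in $\Homeo(X_A)$ is trivial (which does follow from the locally moving property), so that for any $\tau\in\Gamma_A$ the element $h^{-1}\circ\xi(\tau)\circ h\circ\tau^{-1}$, which commutes with every element of $D(\Gamma_A)$, must be the identity. This is standard and is handled in \cite{MatuiCrelle}, but it is a genuine step, not a tautology.
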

Several generalizations and related results to the above theorem 
are seen in \cite{CEOR}, \cite{CRS}, \cite{NO}, etc.

In \cite{MaPAMS2017}, the two notions of uniformly continuous orbit equivalence 
and eventual conjugacy of one-sided topological Markov shifts were introduced as 
strictly stronger equivalence relations than continuous orbit equivalence.
They were proved to be equivalent equivalence relations in \cite{MaPAMS2017}.
$(X_A,\sigma_A)$ and $(X_B,\sigma_B)$ are said to be {\it eventually conjugate}\/ 
if one may take the four continuous functions
$k_1, l_1, k_2, l_2$ in the definition \eqref{eq:coe12} 
of continuous orbit equivalence  as one constant integer.
In \cite{BC}, K. A. Brix and T. M. Carlsen 
found a pair of irreducible non permutation matrices 
such that their one-sided topological Markov shifts are 
eventually conjugate but not topologically conjugate.
Thanks to their paper, 
we know that  the notion of eventual conjugacy is strictly weeker than topological conjugacy.
Let $\Gamma^{\operatorname{AF}}_A$ be the subgroup of $\Gamma_A$ consisting of 
$\tau\in \Gamma_A$ such that $k_\tau(x)= l_\tau(x)$
for all $x \in X_A$ in \eqref{eq:tau}.
It is called the AF full group for $(X_A,\sigma_A)$. 
In \cite{MaPAMS2017}, the following characterization of eventual conjugacy was proved.
\begin{theorem}[{\cite[Theorem 1.5]{MaPAMS2017}}]
Let $A, B$ be irreducible non permutation matrices with entries in $\{0,1\}$.
The following assertions are mutually equivalent.
\begin{enumerate}
\renewcommand{\theenumi}{\roman{enumi}}
\renewcommand{\labelenumi}{\textup{(\theenumi)}}
\item 
$(X_A,\sigma_A)$ and $(X_B,\sigma_B)$ are eventually conjugate. 
\item There exists an isomorphism $\Phi:\OA\longrightarrow \OB$ of $C^*$-algebras 
satisfying 
$\Phi(\DA) = \DB$ and $\Phi\circ \rho^A_t = \rho^B_t \circ \Phi, t \in \T$.
\item There exists an isomorphism $\Phi:\OA\longrightarrow \OB$ of $C^*$-algebras 
satisfying 
$\Phi(\DA) = \DB$ and $\Phi(\F_A) = \F_B$.
\item There exists an isomorphism
$\xi: \Gamma_A\longrightarrow \Gamma_B$ of groups satisfying
$\xi(\Gamma^{\operatorname{AF}}_A) = \Gamma^{\operatorname{AF}}_B$.
\end{enumerate}
\end{theorem}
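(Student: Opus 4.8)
The plan is to establish the cycle (i) $\Rightarrow$ (ii) $\Rightarrow$ (iii) $\Rightarrow$ (i) together with the separate equivalence (i) $\Leftrightarrow$ (iv), organizing everything around the Deaconu--Renault groupoid $G_A$ of $(X_A,\sigma_A)$, for which $\OA \cong C^*(G_A)$, $\DA \cong C_0(G_A^{(0)})$ with $G_A^{(0)} = X_A$, and the gauge action $\rho^A$ is the action induced by the canonical continuous cocycle $c_A : G_A \to \Z$, $c_A(x,n,y) = n$. Under this identification $\F_A = \OA^{\rho^A} = C^*(H_A)$, where $H_A := c_A^{-1}(0)$ is the AF (cocycle-kernel) subgroupoid, and $\Gamma_A$ is the topological full group of $G_A$ with $\Gamma^{\operatorname{AF}}_A$ its sub-full-group supported on $H_A$ (indeed $k_\tau = l_\tau$ in \eqref{eq:tau} says exactly that the bisection implementing $\tau$ has cocycle $0$). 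Throughout I freely use the reconstruction machinery that already underlies the preceding theorem on continuous orbit equivalence, now run in a cocycle-sensitive way.

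For (i) $\Rightarrow$ (ii) I would start from eventual conjugacy data: a homeomorphism $h : X_A \to X_B$ and a constant $K \in \Zp$ with $\sigma_B^K(h(\sigma_A(x))) = \sigma_B^{K+1}(h(x))$ and its symmetric counterpart for $h^{-1}$. Iterating the first relation gives $\sigma_B^{K}(h(\sigma_A^{p}(x))) = \sigma_B^{K+p}(h(x))$ for all $p \ge 0$, so whenever $\sigma_A^k(x) = \sigma_A^l(y)$ one obtains $\sigma_B^{K+k}(h(x)) = \sigma_B^{K+l}(h(y))$. Hence $\varphi(x, k-l, y) := (h(x), k-l, h(y))$ is a well-defined groupoid isomorphism $G_A \to G_B$ (its inverse coming from the symmetric relation) that restricts to $h$ on units and, crucially, satisfies $c_B \circ \varphi = c_A$ \emph{on the nose} --- this is precisely where the constancy of the lag is used. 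Functoriality of $C^*(\cdot)$ then produces an isomorphism $\Phi : \OA \to \OB$ with $\Phi(\DA) = \DB$, and $c_B \circ \varphi = c_A$ upgrades this to gauge equivariance $\Phi \circ \rho^A_t = \rho^B_t \circ \Phi$, $t \in \T$.

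The implication (ii) $\Rightarrow$ (iii) is immediate: if $\Phi$ intertwines the gauge actions it carries fixed-point algebras to fixed-point algebras, so $\Phi(\F_A) = \Phi(\OA^{\rho^A}) = \OB^{\rho^B} = \F_B$. For (iii) $\Rightarrow$ (i) I would use that $\DA$ is a regular MASA of both $\OA$ and $\F_A$, so $\Phi(\DA) = \DB$ recovers a groupoid isomorphism $\varphi : G_A \to G_B$, and the extra hypothesis $\Phi(\F_A) = \F_B$ makes $\Phi|_{\F_A}$ an isomorphism of the Cartan pairs $(\F_A,\DA) \to (\F_B,\DB)$, forcing $\varphi(H_A) = H_B$, i.e. $\varphi(c_A^{-1}(0)) = c_B^{-1}(0)$. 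Since $c_A, c_B$ are surjective cocycles with these kernels, $G_A/H_A \cong \Z \cong G_B/H_B$, whence $c_B \circ \varphi = \pm c_A$; fixing the sign to $+$ by the forward direction of the shift gives $c_B \circ \varphi = c_A$. Reading this back on the generators $(x,1,\sigma_A(x))$ yields continuous $k,l : X_A \to \Zp$ with $l - k \equiv 1$ and $\sigma_B^{k(x)}(h(\sigma_A(x))) = \sigma_B^{l(x)}(h(x))$, a form of uniformly continuous orbit equivalence, which by the cited equivalence of \cite{MaPAMS2017} is eventual conjugacy.

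Finally, (i) $\Leftrightarrow$ (iv) refines the group reconstruction behind the orbit-equivalence theorem. An isomorphism $\xi : \Gamma_A \to \Gamma_B$ is induced by, and induces, a groupoid isomorphism $\varphi : G_A \to G_B$; the added constraint $\xi(\Gamma^{\operatorname{AF}}_A) = \Gamma^{\operatorname{AF}}_B$ says exactly that $\varphi$ carries the support subgroupoid $H_A$ of $\Gamma^{\operatorname{AF}}_A$ onto $H_B$. As in (iii) $\Rightarrow$ (i), $\varphi(H_A) = H_B$ is equivalent (after fixing the sign) to $c_B \circ \varphi = c_A$, hence to eventual conjugacy; conversely the cocycle-preserving $\varphi$ built in (i) $\Rightarrow$ (ii) induces such a $\xi$. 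The main obstacle throughout is this cocycle-reconstruction step: upgrading a diagonal- (respectively full-group-) isomorphism to one that respects the $\Z$-grading, and ruling out the orientation-reversing sign. This is the point where the AF datum ($\F_A$, respectively $\Gamma^{\operatorname{AF}}_A$) must be converted into preservation of the canonical cocycle $c_A$, and it rests on the rigidity of the Cartan pair $(\OA,\DA)$ and of the topological full group.
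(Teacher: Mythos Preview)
The paper does not supply its own proof of this theorem; it is quoted verbatim from \cite{MaPAMS2017} as background for the main results, so there is no in-paper argument to compare against. Let me therefore assess your proposal on its own merits.

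Your groupoid framework is natural, and (i)$\Rightarrow$(ii)$\Rightarrow$(iii) are fine. The genuine gap is precisely where you flag it yourself: the cocycle-reconstruction step in (iii)$\Rightarrow$(i) and (iv)$\Rightarrow$(i). Two points deserve scrutiny. First, the phrase ``$G_A/H_A\cong\Z$, whence $c_B\circ\varphi=\pm c_A$'' is not an argument: groupoids do not admit quotient \emph{groups} by wide subgroupoids, and what you actually need is that every continuous surjective cocycle $c':G_A\to\Z$ with $\ker c'=H_A$ equals $\pm c_A$. This is true, but it requires showing that $c'$ factors through $c_A$ via an automorphism of $\Z$ (using that $c_A$ separates $H_A$-cosets in each source fibre and that the cocycle identity forces the induced map $\Z\to\Z$ to be additive and base-point independent).

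Second, and more seriously, ``fixing the sign to $+$ by the forward direction of the shift'' is a hope, not a proof. For two-sided shifts the minus sign genuinely occurs, so you must invoke the one-sidedness of $\sigma_A$. One way: $c_A^{-1}(\{1\})$ contains a compact open bisection whose \emph{source} is all of $X_A$ (prepend a symbol), while $c_A^{-1}(\{-1\})$ does not (any such bisection would yield a continuous injection $X_A\hookrightarrow X_A$ whose image is tail-equivalent to $\sigma_A$, which is impossible by an entropy/pigeonhole count). This asymmetry is an \'etale-groupoid invariant and forces the $+$ sign. Without this or an equivalent argument, (iii)$\Rightarrow$(i) and (iv)$\Rightarrow$(i) are incomplete; your closing paragraph correctly diagnoses this as ``the main obstacle'' but does not overcome it. Finally, note that your last appeal to the equivalence between uniformly continuous orbit equivalence and eventual conjugacy is itself a result of \cite{MaPAMS2017}, so you should be explicit that it is a \emph{different} statement in that paper, lest the argument look circular.
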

To obtain a characterization of topological conjugacy 
of one-sided topological Markov shifts in terms of 
Cuntz--Krieger algebras and continuous full groups,
we need more finer information than
their gauge actions and AF full groups
in the folowing way.
Let us denote by $C(X_A,\Z)$ the set of integer valued continuous 
functions on $X_A$.
For a function $f \in C(X_A, \Z)$, consider a one-parameter unitary 
group $\exp(2\pi\sqrt{-1} f t)$ in $\DA$. As 
$f$ is  a continuous function on $X_A$, 
it is regarded as an element of $\DA$.  
We then define gauge action $\rho^{A,f}$ on $\OA$ with potential function $f$ 
by the automorphism groups $\rho^{A,f}_t, t \in \T$ of $\OA$ satisfying
$\rho^{A,f}_t(S_j) =\exp(2\pi\sqrt{-1} f t) S_j, \, j=1,2,\dots,N.$     
If $f\equiv 1$, the generalized gauge action 
$\rho^{A,f}$ reduces to the standand gauge action.
Let us denote by 
$\F_{A,f}$ the fixed point algebra of $\OA$ under the action $\rho^{A,f}$.
It is called the {\it cocycle algebra for}\/ $f$.
For the counter part to the group $\Gamma^{\operatorname{AF}}_A$,
a subgroup of $\Gamma_A$ associated to $f$ 
was introduced in \cite{MaPre2020c}. 
It is called the cocycle full group and written 
$\Gamma_{A,f}$.
The subgroup
$\Gamma_{A,f}$ is defined in the following way.
For $(z,\tau) \in X_A\times \Gamma_A$, define
\begin{equation}%\label{eq:rhofxtau}
\rho^f(z,\tau) =\sum_{i=0}^{l_\tau(z)}f(\sigma_A^i(z)) -
\sum_{i=0}^{k_\tau(z)}f(\sigma_A^i(\tau(z))). 
\end{equation}
\begin{definition}[{\cite[Definition 1.3]{MaPre2020c}}]
The {\it cocycle full group for}\/ $f$ is defined by 
\begin{equation}%\label{eq:GammaAf}
\Gamma_{A,f} = \{ \tau \in \Gamma_A \mid \rho^f(z,\tau) =0 \text{ for all } z \in X_A \}.
\end{equation}
\end{definition}
It is actually a  subgroup of $\Gamma_A$
(Proposition \ref{prop:group}).
If $f\equiv 1$, then the subgroup coincides with the AF full group,
that is,
$\Gamma_{A,1} = \Gamma^{\operatorname{AF}}_A$.
Let us stand for $U(\FAf,\DA)$
the unitary normalizer of $\DA$ in $\FAf$, that is,
\begin{equation}
U(\FAf,\DA) = \{ u \in U(\FAf) \mid u \DA u^* = \DA \}.
\end{equation} 
Then we will know that $\Ad(u)$ for $u \in U(\FAf,\DA)$
gives rise to an element of the cocycle  full group
$\Gamma_{A,f}$. 
Let us denote by $\Inn(\FAf,\DA)$ the group of inner automorphisms
of $\FAf$ keeping the subalgebra $\DA$ globally.
We will actually know that 
 there exists a short exaxt sequence 
\begin{equation}
1\longrightarrow U(\DA)
 \overset{\Ad}{\longrightarrow}\Inn(\FAf,\DA)
 \longrightarrow 
\Gamma_{A,f}
 \longrightarrow 1\label{eq:exactFAf1}
\end{equation}
that splits (Theorem \ref{thm:main1}).

As a main result of the paper, we will show the following theorem
that   
characterizes topological conjugate one-sided topological Markov shifts 
in terms of their cocycle full groups of continuous full groups and 
cocycle subalgebras of Cuntz--Krieger algebras. 
\begin{theorem}%[{Theorem \ref{thm:main}}] 
\label{thm:mainthm}
Let $A, B$ be irreducible non permutation matrices with entries in $\{0,1\}$.
Then the following assertions are mutually equivalent.
\begin{enumerate}
\renewcommand{\theenumi}{\roman{enumi}}
\renewcommand{\labelenumi}{\textup{(\theenumi)}}
\item There exists a homeomorphism
$h:X_A\longrightarrow X_B$ such that 
$h \circ \sigma_A = \sigma_B\circ h.$
\item There exists an isomorphism $\Phi:\OA\longrightarrow \OB$ of $C^*$-algebras such that 
$\Phi(\DA) = \DB$ and 
\begin{equation*}
\Phi\circ \rho^{A, g\circ h}_t = \rho^{B,g}_t \circ \Phi
\qquad \text{ for all } g \in C(X_B,\Z), t \in \T,
\end{equation*}
where $h:X_A\longrightarrow X_B$ is a unique homeomorphism satisfying
$\Phi(f) = f\circ h^{-1}$ for $f \in \DA$.
\item
There exists an isomorphism $\Phi:\OA\longrightarrow \OB$ of $C^*$-algebras such that 
$\Phi(\DA) = \DB$ and 
\begin{equation*}
\Phi(\F_{A, g\circ h}) = \F_{B,g}\qquad
\text{ for all }
g \in C(X_B,\Z), t \in \T,
\end{equation*}
where $h:X_A\longrightarrow X_B$ is a unique homeomorphism satisfying
$\Phi(f) = f\circ h^{-1}$ for $f \in \DA$.
\item There exists an isomorphism
$\xi: \Gamma_A\longrightarrow \Gamma_B$ of groups such that 
\begin{equation*}
\xi(\Gamma_{A,g\circ h}) =\Gamma_{B,g}
\qquad
\text{  for all }g \in C(X_B,\Z),
\end{equation*}
where $h:X_A\longrightarrow X_B$
is a unique homeomorphism satisfying 
$\xi(\tau) = h\circ \tau\circ h^{-1}$ for $\tau \in \Gamma_A$.
\end{enumerate}
\end{theorem}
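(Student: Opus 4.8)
The plan is to establish the cyclic chain of implications (i) $\Rightarrow$ (ii) $\Rightarrow$ (iii) $\Rightarrow$ (iv) $\Rightarrow$ (i). The guiding principle throughout is that a homeomorphism $h:X_A\to X_B$ transports a potential $g\in C(X_B,\Z)$ to $g\circ h\in C(X_A,\Z)$, and that $g\mapsto g\circ h$ is a bijection of $C(X_B,\Z)$ onto $C(X_A,\Z)$; thus conditions (ii)--(iv) involve, in effect, all potentials on $X_A$ at once, and their content is that this transport is compatible with $h$ acting as a genuine conjugacy rather than merely as a continuous orbit equivalence.

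For (i) $\Rightarrow$ (ii), I would realize $\OA$ as the groupoid $C^*$-algebra of the Deaconu--Renault groupoid $\mathcal{G}_A=\{(x,m-n,y)\in X_A\times\Z\times X_A\mid \sigma_A^m x=\sigma_A^n y\}$, with $\DA\cong C(X_A)=C(\mathcal{G}_A^{(0)})$; under this identification the generalized gauge action $\rho^{A,f}$ is the one attached to the continuous $\Z$-valued $1$-cocycle $c_f(x,m-n,y)=\sum_{i=0}^{m-1}f(\sigma_A^i x)-\sum_{i=0}^{n-1}f(\sigma_A^i y)$, and $c_1$ is the canonical $\Z$-grading. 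A one-sided conjugacy $h$ satisfies $h\circ\sigma_A^n=\sigma_B^n\circ h$ for all $n\ge0$, so $(x,k,y)\mapsto(h(x),k,h(y))$ is a topological groupoid isomorphism $\mathcal{G}_A\to\mathcal{G}_B$ preserving the $\Z$-grading, and hence induces a $*$-isomorphism $\Phi:\OA\to\OB$ with $\Phi(\DA)=\DB$ and $\Phi(f)=f\circ h^{-1}$ for $f\in\DA$. Since $g(\sigma_B^i(h(z)))=(g\circ h)(\sigma_A^i(z))$ for every $i\ge0$, this groupoid isomorphism carries $c_{g\circ h}$ onto $c_g$; hence $\Phi\circ\rho^{A,g\circ h}_t=\rho^{B,g}_t\circ\Phi$ for all $g\in C(X_B,\Z)$ and $t\in\T$, which is (ii).

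The implication (ii) $\Rightarrow$ (iii) is immediate, since an isomorphism intertwining two one-parameter automorphism groups carries the fixed-point algebra of one onto that of the other. For (iii) $\Rightarrow$ (iv): by \cite[Corollary~1.2]{MaIsrael2015} and its proof, the isomorphism $\Phi$ with $\Phi(\DA)=\DB$ gives a continuous orbit equivalence $h$ (the homeomorphism of (iii)) and $\xi:=\Ad(h)$ is an isomorphism $\Gamma_A\to\Gamma_B$. Fix $g\in C(X_B,\Z)$ and put $f=g\circ h$; from $\Phi(\DA)=\DB$ and $\Phi(\F_{A,f})=\F_{B,g}$, conjugation by $\Phi$ maps $\Inn(\F_{A,f},\DA)$ isomorphically onto $\Inn(\F_{B,g},\DB)$ and $\Ad(U(\DA))$ onto $\Ad(U(\DB))$, so passing to the quotients in the split exact sequence \eqref{eq:exactFAf1} of Theorem~\ref{thm:main1} yields an isomorphism $\Gamma_{A,f}\to\Gamma_{B,g}$, which one checks is the restriction of $\xi$. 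Hence $\xi(\Gamma_{A,g\circ h})=\Gamma_{B,g}$ for every $g$, and $h$ is the unique homeomorphism implementing $\xi$ because the $\Gamma_A$-action on $X_A$ has trivial centralizer in $\Homeo(X_A)$.

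The decisive implication is (iv) $\Rightarrow$ (i). Since $\xi$ is an isomorphism of continuous full groups, its implementing homeomorphism $h$ is a continuous orbit equivalence (\cite[Corollary~1.2]{MaIsrael2015}, \cite{MatuiCrelle}); applying (iv) with $g\equiv1$ gives $\xi(\Gamma^{\operatorname{AF}}_A)=\Gamma^{\operatorname{AF}}_B$, so by \cite[Theorem~1.5]{MaPAMS2017} and its proof the map $h$ is an eventual conjugacy, i.e. there is a constant $c\ge0$ with $\sigma_B^c\circ h\circ\sigma_A=\sigma_B^{c+1}\circ h$ and $\sigma_A^c\circ h^{-1}\circ\sigma_B=\sigma_A^{c+1}\circ h^{-1}$, whence $\sigma_B^c\circ h\circ\sigma_A^n=\sigma_B^{c+n}\circ h$ for all $n\ge0$. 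Feeding these relations into the definition of $\rho$ and collecting terms, one gets for $\tau\in\Gamma_A$, $z\in X_A$ and $w=h(z)$,
\[
\rho^{g}(w,\xi(\tau))=\rho^{\hat g}(z,\tau),\qquad
\hat g:=\sum_{i=0}^{c}g\circ\sigma_B^{i}\circ h\;-\;\sum_{i=0}^{c-1}g\circ\sigma_B^{i}\circ h\circ\sigma_A ,
\]
so that (iv) forces $\Gamma_{A,\,g\circ h}=\Gamma_{A,\,\hat g}$ for every $g\in C(X_B,\Z)$. Evaluating this on the cylinder swaps $\tau_{\mu,\nu}\in\Gamma_A$ (defined for admissible words $\mu,\nu$ of equal length ending in the same symbol, for which $\rho^{f}(\cdot,\tau_{\mu,\nu})$ is an explicit finite difference of partial sums of $f$ along $\mu$ and along $\nu$) together with the constant potential, one deduces that the $\Z$-linear assignment $g\circ h\mapsto\hat g$ is the identity; comparing the two expressions this reads $\sum_{i=0}^{c-1}g(\sigma_B^{i}(h(\sigma_A x)))=\sum_{i=0}^{c-1}g(\sigma_B^{i+1}(h(x)))$ for all $g$ and $x$, and letting $g$ run over characteristic functions of clopen sets it forces the orbit segments $\{\sigma_B^{i}(h(\sigma_A x))\}_{0\le i<c}$ and $\{\sigma_B^{i+1}(h(x))\}_{0\le i<c}$ to coincide as multisets. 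On the dense set of $x$ for which $h(\sigma_A x)$ is not eventually periodic (an eventual conjugacy preserves eventual periodicity) each such segment consists of distinct points and has a unique source --- the element with no $\sigma_B$-preimage inside the segment --- so the multiset equality gives $h(\sigma_A x)=\sigma_B(h(x))$, and by continuity $h\circ\sigma_A=\sigma_B\circ h$, which is (i). The main obstacle is precisely this last step: one must extract from the whole family $\{\Gamma_{A,f}\}_f$ --- not merely from $\Gamma^{\operatorname{AF}}_A=\Gamma_{A,1}$, which by the Brix--Carlsen example \cite{BC} gives only eventual conjugacy --- enough to annihilate the cocycle defect $c$, and this requires a careful choice of elements of $\Gamma_A$ and explicit manipulation of $\rho$; everything preceding it is bookkeeping with the formulas above.
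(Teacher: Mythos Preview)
Your chain (i)$\Rightarrow$(ii)$\Rightarrow$(iii)$\Rightarrow$(iv) is fine and essentially matches the paper's treatment (the paper quotes \cite{MaPre2020b} for (i)$\Leftrightarrow$(ii) rather than the groupoid picture, but your argument is correct; for (iii)$\Rightarrow$(iv) the paper packages the same short exact sequence reasoning into Proposition~\ref{prop:GammaUF}).

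The problem is in (iv)$\Rightarrow$(i). Up to the displayed identity $\rho^g(h(z),\xi(\tau))=\rho^{\hat g}(z,\tau)$ you are on track: this is exactly Proposition~\ref{prop:xihg}, and indeed $\hat g=\Psi_h(g)$ in the paper's notation, so (iv) reduces to $\Gamma_{A,g\circ h}=\Gamma_{A,\Psi_h(g)}$ for all $g$. But your next move --- ``evaluating on the cylinder swaps $\tau_{\mu,\nu}$ with $|\mu|=|\nu|$ together with the constant potential, one deduces that $g\circ h\mapsto\hat g$ is the identity'' --- is a genuine gap. Knowing $\Gamma_{A,f}=\Gamma_{A,f'}$ does \emph{not} force $f=f'$: already $f'=2f$ or $f'=-f$ gives the same zero set of $\rho^{(\cdot)}(\,\cdot\,,\tau)$ for every $\tau$. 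Restricting to equal-length swaps makes things worse, since those are precisely the AF elements ($d_\tau\equiv 0$), and for them $\rho^f(\cdot,\tau)$ only sees differences $f^n(\mu z)-f^n(\nu z)$; this cannot separate $f$ from $T(f)$ in general. Your subsequent multiset/source argument is fine \emph{once} $\hat g=g\circ h$ is established, but that equality is the entire content of the implication and you have not proved it.

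The paper avoids this trap by never attempting to prove $\hat g=g\circ h$ directly. Instead it substitutes $g\mapsto g-g\circ\sigma_B$, uses the identity $\Psi_h(g-g\circ\sigma_B)=g\circ h-g\circ h\circ\sigma_A$ (Lemma~\ref{lem:cocyclegroup1}) to obtain
\[
\Gamma_{A,\,g\circ h-g\circ h\circ\sigma_A}=\Gamma_{A,\,g\circ h-g\circ\sigma_B\circ h}\qquad\text{for all }g,
\]
and observes (Lemma~\ref{lem:cocyclegroup2}) that membership in the left-hand group is simply the condition $g(h(x))=g(h(\tau(x)))$. Then, assuming $h(\sigma_A z)\neq\sigma_B(h(z))$ at a non-eventually-periodic $z$, it builds a \emph{non-AF} element $\tau_0$ (the shift/unshift swap between $U_{z_1z_2}$ and $U_{z_2}$, with $l_{\tau_0}-k_{\tau_0}=\pm1$) and a potential $g$ vanishing on $h(U_{z_1z_2})\cup h(\sigma_A(U_{z_1z_2}))$ but with $g(\sigma_B(h(z)))=1$. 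A direct computation of both sides of \eqref{eq:AgsB} for this $\tau_0$ at $z$ yields $2g(\sigma_B(h(\sigma_A z)))=1$, a contradiction in $\Z$ (Proposition~\ref{prop:cocyclegroup}). The two crucial choices you are missing are therefore (a) the coboundary replacement $g\mapsto g-g\circ\sigma_B$, which linearises the left-hand cocycle condition, and (b) a swap with $d_\tau\neq 0$, without which the right-hand side cannot be made to disagree.
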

The equivalence between (i) and (ii) was proved in \cite{MaPre2020b}.
It is straightforward to see that the implication 
(ii) $\Longrightarrow$ (iii) holds.
The implication 
(iii) $\Longrightarrow$ (iv)
is due to Proposition \ref{prop:GammaUF}.
The implication (iv) $\Longrightarrow$ (i)
is a main ingredient in the paper,
that  will be proved in Section \ref{sec:final}.

%In Section \ref{sec:groupoid}, we will introduce 
%an \'etale groupoid $G_{A,f}$ such that 
%$C^*(G_{A,f}) = \F_{A,f}$,
%and interprete 
%the condition (iii) in terms of groupoids.

\medskip

Throughout the paper, $\N, \Zp$ denote the set of positive integers, the set of nonnegative integers,
respectively.

%\newpage

%%%%%%%%%%%%%%%%%%%%%%%%%%%%%%%%%%%%%%%%%%%%%%%%%%%%%%%%%
\section{Cocycle full groups and cocycle algebras}
%%%%%%%%%%%%%%%%%%%%%%%%%%%%%%%%%%%%%%%%%%%%%%
Throughout the section, we fix an irreducible non permutation 
matrix $A=[A(i,j)]_{i,j=1}^N$ with entries in $\{0,1\}$.

{\bf 1. Cocycle full groups.}

Recall that a homeomorphism $\tau: X_A\longrightarrow X_A$ 
belongs to the continuous full group $\Gamma_A$ if 
there exist $k_\tau, l_\tau \in C(X_A,\Zp)$ satisfying
\eqref{eq:tau}. 
Although the functions $k_\tau, l_\tau \in C(X_A,\Zp)$
are not uniquely determined for $\tau$  but its difference 
$d_\tau = l_\tau - k_\tau$ is unique (\cite[Lemma 7.6]{MMGGD}). 
For a function $f \in C(X_A,\Z)$ and $n \in \Zp$, let us define the function  
$f^n \in C(X_A,\Z)$ by
$f^n(x) = \sum_{i=0}^{n-1} f(\sigma_A^i(x))$
for $n\in \N$
and
$f^n(x)=0$ for $n=0$.
It is straightforward to see that identities 
$ %\label{eq:fnm}
f^{n+m}(x) = f^m(x) + f^n(\sigma_A^m(x))
$
for
$n,  m \in \Zp,\,\, x \in X_A
$
hold. 
Recall that  the function $\rho^f: X_A\times \Gamma_A\longrightarrow \Z$
is defined by
\begin{equation}\label{eq:rhofxtau}
\rho^f(x,\tau) =\sum_{i=0}^{l_\tau(x)}f(\sigma_A^i(x)) -
\sum_{i=0}^{k_\tau(x)}f(\sigma_A^i(\tau(x))), \qquad x \in X_A. 
\end{equation}
By the equality \eqref{eq:tau},
the identity
\begin{equation*}
\rho^f(x,\tau) =f^{l_\tau(x)}(x) -  f^{k_\tau(x)}(\tau(x)), \qquad x \in X_A
\end{equation*}
holds, because $f(\sigma_A^{l_\tau(x)}(x))=f(\sigma_A^{k_\tau(x)}(\tau(x))).$  
\begin{lemma}
$\rho^f(x,\tau) $ does not depend on the choice of $l_\tau, k_\tau$ as long as they satisfy
\eqref{eq:tau}.
\end{lemma}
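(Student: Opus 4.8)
The plan is to invoke the closed form $\rho^f(x,\tau) = f^{l_\tau(x)}(x) - f^{k_\tau(x)}(\tau(x))$ recorded just above the statement, and then to compare two admissible pairs $(k_\tau,l_\tau)$ and $(k'_\tau,l'_\tau)$ for $\tau$ pointwise in $x$. The one genuinely necessary external ingredient is that the difference $d_\tau = l_\tau - k_\tau$ does not depend on the admissible pair, i.e.\ \cite[Lemma 7.6]{MMGGD}; this is precisely what forbids shifting the exponents around a periodic orbit, so it cannot be dispensed with (without it the claimed value would really change at periodic points). Everything else is just the additive identity $f^{n+m}(x) = f^m(x) + f^n(\sigma_A^m(x))$ recorded above.

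Concretely, I would fix $x \in X_A$, abbreviate $k = k_\tau(x)$, $l = l_\tau(x)$, $k' = k'_\tau(x)$, $l' = l'_\tau(x)$, and — interchanging the two pairs if necessary — assume $k \le k'$, setting $m = k' - k \in \Zp$. Since $l_\tau - k_\tau = l'_\tau - k'_\tau$ by \cite[Lemma 7.6]{MMGGD}, this forces $l' = l + m$. Using the cocycle identity together with $l+m = m+l$ and $k+m = m+k$ we obtain
\begin{equation*}
f^{l'}(x) = f^{l+m}(x) = f^{l}(x) + f^{m}(\sigma_A^{l}(x)),
\qquad
f^{k'}(\tau(x)) = f^{k+m}(\tau(x)) = f^{k}(\tau(x)) + f^{m}(\sigma_A^{k}(\tau(x))).
\end{equation*}
By \eqref{eq:tau} applied to the admissible pair $(k_\tau,l_\tau)$ we have $\sigma_A^{l}(x) = \sigma_A^{k}(\tau(x))$, so the two terms $f^{m}(\sigma_A^{l}(x))$ and $f^{m}(\sigma_A^{k}(\tau(x)))$ coincide and cancel upon subtraction, leaving $f^{l'}(x) - f^{k'}(\tau(x)) = f^{l}(x) - f^{k}(\tau(x))$. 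As $x \in X_A$ was arbitrary, $\rho^f(\,\cdot\,,\tau)$ computed from $(k'_\tau,l'_\tau)$ agrees with the one computed from $(k_\tau,l_\tau)$.

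I do not expect any real obstacle here: once the difference of the exponent functions is known to be canonical, the argument is the three-line computation above, so the write-up should amount to little more than that display preceded by a sentence recalling why $d_\tau$ is well defined.
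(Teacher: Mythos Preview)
Your argument is correct and is essentially the same as the paper's own proof: both invoke \cite[Lemma 7.6]{MMGGD} to get $l'-l = k'-k =: m \ge 0$ (after possibly swapping the pairs at the given point), expand $f^{l+m}$ and $f^{k+m}$ via the additive identity $f^{n+m} = f^m + f^n\circ\sigma_A^m$, and cancel the two $f^m$ terms using $\sigma_A^{l}(x) = \sigma_A^{k}(\tau(x))$ from \eqref{eq:tau}. The only cosmetic difference is that you make the cancellation step explicit, whereas the paper leaves it implicit in the final equality.
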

\begin{proof}
For $\tau \in \Gamma_A$,
let $l^\prime_\tau, k^\prime_\tau$ be another pair of $l_\tau, k_\tau$ satisfying \eqref{eq:tau}. 
By \cite[Lemma 7.6]{MMGGD},
one knows $l_\tau(x) - k_\tau(x) =l^\prime_\tau(x) - k^\prime_\tau(x)$.
We put
$m_\tau(x) = l^\prime_\tau(x) - l_\tau(x) (=k^\prime_\tau(x) - k_\tau(x))$.
One may assume that $m_\tau(x) \ge 0$.
It then follows that 
\begin{align*}
%& \sum_{i=0}^{l^\prime_\tau(x)}f(\sigma_A^i(x)) -
%  \sum_{i=0}^{k^\prime_\tau(x)}f(\sigma_A^i(\tau(x))) \\
%=
 & f^{l^\prime_\tau(x)}(x) -  f^{k^\prime_\tau(x)}(\tau(x)) \\ 
= & f^{l_\tau(x) +m_\tau(x)}(x) -  f^{k_\tau(x)+m_\tau(x)}(\tau(x)) \\ 
= & \{f^{l_\tau(x)}(x) + f^{m_\tau(x)}(\sigma_A^{l_\tau(x)}(x)) \}
- \{ f^{k_\tau(x)}(\tau(x))  + f^{m_\tau(x)}(\sigma_A^{k_\tau(x)}(\tau(x)) \} \\ 
= & f^{l_\tau(x)}(x) -  f^{k_\tau(x)}(\tau(x)) 
%= &
%\sum_{i=0}^{l_\tau(x)}f(\sigma_A^i(x)) -
%  \sum_{i=0}^{k_\tau(x)}f(\sigma_A^i(\tau(x))).
\end{align*}
\end{proof}
Although the following lemma was shown in \cite{MaPre2020c},
we give its proof for the sake of completeness.
\begin{lemma}[{\cite[Lemma 2.5]{MaPre2020c}}] \label{lem:2.1}
For $ \tau, \tau_1, \tau_2 \in \Gamma_A$ and $x \in X_A$, 
we have the following identities. 
\begin{enumerate}
\renewcommand{\theenumi}{\roman{enumi}}
\renewcommand{\labelenumi}{\textup{(\theenumi)}}
%\item 
%\begin{equation} \label{eq:rhof1}
%$\rho^f(x,\tau)  = f^{l_\tau(x)}(x) - f^{k_\tau(x)}(\tau(x)). $
%\end{equation}
%\item    
%\begin{equation*}
%$l_{\tau_2\circ \tau_1} = l_{\tau_1}  + l_{\tau_2} \circ \tau_1$ and 
%$k_{\tau_2\circ \tau_1} =k_{\tau_1} + k_{\tau_2} \circ \tau_1.$
\item
$
\rho^f(x,\tau_2 \circ \tau_1) 
=
\rho^f(x, \tau_1) + \rho^f(\tau_1(x), \tau_2).
$
\item
$
\rho^f(x,\tau^{-1}) 
= -\rho^f(x,\tau).$
\end{enumerate}
\end{lemma}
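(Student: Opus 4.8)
The plan is to work throughout with the closed form
\[
\rho^f(x,\tau) = f^{l_\tau(x)}(x) - f^{k_\tau(x)}(\tau(x)), \qquad x \in X_A,
\]
recorded just above the statement, which by the preceding lemma is independent of the admissible choice of $k_\tau,l_\tau$. Apart from bookkeeping of indices, the only tools needed are the telescoping identity $f^{n+m}(z) = f^m(z) + f^n(\sigma_A^m(z))$ and the defining relation \eqref{eq:tau}. The second assertion (ii) will be a formal consequence of (i), so essentially all the content is in (i).

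For (i), fix $\tau_1,\tau_2\in\Gamma_A$ and consider the continuous functions
\[
k(x) := k_{\tau_1}(x) + k_{\tau_2}(\tau_1(x)), \qquad l(x) := l_{\tau_1}(x) + l_{\tau_2}(\tau_1(x)).
\]
I would first verify that they form an admissible pair for the composition, that is, $\sigma_A^{k(x)}(\tau_2(\tau_1(x))) = \sigma_A^{l(x)}(x)$: this follows by applying \eqref{eq:tau} to $\tau_2$ at the point $\tau_1(x)$ and then to $\tau_1$ at $x$, using that powers of $\sigma_A$ commute (and this reproves $\tau_2\circ\tau_1\in\Gamma_A$). Evaluating $\rho^f(x,\tau_2\circ\tau_1) = f^{l(x)}(x) - f^{k(x)}(\tau_2(\tau_1(x)))$ with this pair, I split each of the two terms into two summands by the telescoping identity and rewrite the resulting shifted base points through \eqref{eq:tau} --- for $\tau_1$ inside the first term, for $\tau_2$ inside the second. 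Two of the summands, namely $f^{l_{\tau_1}(x)}(x)$ and $f^{k_{\tau_2}(\tau_1(x))}(\tau_2(\tau_1(x)))$, also appear in $\rho^f(x,\tau_1) + \rho^f(\tau_1(x),\tau_2)$ and cancel, and what is left of the desired equality reduces to the elementary identity
\[
f^a(\sigma_A^b(y)) + f^b(y) = f^{a+b}(y) = f^a(y) + f^b(\sigma_A^a(y)),
\]
applied with $y = \tau_1(x)$, $a = l_{\tau_2}(\tau_1(x))$, $b = k_{\tau_1}(x)$. I expect this rearrangement of the composed indices to be the only slightly delicate point; it is purely mechanical.

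For (ii), note that $k_{\id}\equiv 0$ and $l_{\id}\equiv 0$ form an admissible pair for $\id$, so that $\rho^f(x,\id) = f^0(x) - f^0(x) = 0$ for every $x\in X_A$. Applying (i) to the decomposition $\id = \tau\circ\tau^{-1}$, the left-hand side vanishes and one obtains $0 = \rho^f(x,\tau^{-1}) + \rho^f(\tau^{-1}(x),\tau)$, that is, the identity (ii).
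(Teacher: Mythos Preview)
Your proof is correct and follows the paper's exactly: part (i) is handled via the composed pair $l_{\tau_2\circ\tau_1}=l_{\tau_1}+l_{\tau_2}\circ\tau_1$, $k_{\tau_2\circ\tau_1}=k_{\tau_1}+k_{\tau_2}\circ\tau_1$ together with repeated use of $f^{n+m}=f^m+f^n\circ\sigma_A^m$, and part (ii) by combining (i) with $\rho^f(\cdot,\id)=0$. One small point worth flagging: what your argument (and the paper's one-line proof) actually yields is $\rho^f(x,\tau^{-1}) = -\rho^f(\tau^{-1}(x),\tau)$, not the displayed $-\rho^f(x,\tau)$; the latter does not follow from (i) in general and appears to be a misprint, while only the former (equivalently, the implication $\rho^f(\cdot,\tau)\equiv 0 \Leftrightarrow \rho^f(\cdot,\tau^{-1})\equiv 0$) is used later in the paper.
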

\begin{proof}
%(i) By using the equality \eqref{eq:tau}, one immediately knows the desired identity.
%
%(ii) The desired identities follow from 
(i) By the equalities
$$
l_{\tau_2\circ \tau_1} = l_{\tau_1}  + l_{\tau_2} \circ \tau_1,
\qquad
k_{\tau_2\circ \tau_1} =k_{\tau_1} + k_{\tau_2} \circ \tau_1
$$
as in  \cite[Lemma 7.7]{MMGGD}, we have 
\begin{align*}
& \rho^f(x,\tau_2 \circ \tau_1) \\ 
% =& \sum_{i=0}^{l_{\tau_2 \circ\tau_1}(x) -1}f(\sigma_A^i(x))
% -   \sum_{j=0}^{k_{\tau_2 \circ\tau_1}(x) -1}f(\sigma_A^j({\tau_2 \circ\tau_1}(x)) \\
%=& f^{l_{\tau_2 \circ\tau_1}(x)} (x) - f^{k_{\tau_2 \circ\tau_1}(x)} ({\tau_2 \circ\tau_1}(x)) \\ 
%= &f^{l_{\tau_1}(x) +l_{\tau_2}(\tau_1(x))} (x) 
%   - f^{k_{\tau_1}(x) +k_{\tau_2}(\tau_1(x))}{(\tau_2 \circ\tau_1}(x)) \\ 
=& f^{l_{\tau_1}(x)}(x)  + f^{l_{\tau_2}(\tau_1(x))} (\sigma_A^{l_{\tau_1}(x)}(x))  \\
 & -\{  f^{k_{\tau_2}(\tau_1(x))}({\tau_2 \circ\tau_1}(x))
        + f^{k_{\tau_1}(x)} (\sigma_A^{k_{\tau_2}(\tau_1(x) ) }({\tau_2 \circ\tau_1}(x)) )\} \\
%=& \rho^f(x,\tau_1)  + f^{l_{\tau_2}(\tau_1(x))} (\sigma_A^{l_{\tau_1}(x)}(x))  \\
%   &+ f^{k_{\tau_1}(x)}(\tau_1(x)) 
 %    -\{  f^{k_{\tau_2}(\tau_1(x))}({\tau_2 \circ\tau_1}(x))
 %       + f^{k_{\tau_1}(x)} (\sigma_A^{k_{\tau_2}(\tau_1(x))}({\tau_2 \circ\tau_1}(x)) \} \\
=& \rho^f(x,\tau_1) + f^{k_{\tau_1(x)}}(\tau_1(x)) 
     + f^{l_{\tau_2}(\tau_1(x))} (\sigma_A^{k_{\tau_1}(x)}(\tau_1(x))) \\
   &
     - f^{k_{\tau_2}(\tau_1(x))}({\tau_2 \circ\tau_1}(x))
        - f^{k_{\tau_1}(x)} (\sigma_A^{l_{\tau_2}(\tau_1(x) )}(\tau_1(x)) ) \\
 =& \rho^f(x,\tau_1)  + f^{l_{\tau_2}(\tau_1(x)) + k_{\tau_1}(x)}(\tau_1(x)) \\ 
   &  - f^{k_{\tau_1}(x)} (\sigma_A^{l_{\tau_2}(\tau_1(x) )}(\tau_1(x) ) )
        - f^{k_{\tau_2}(\tau_1(x))}({\tau_2 \circ\tau_1}(x)) \\
%=& \rho^f(x,\tau_1)  + f^{l_{\tau_2}(\tau_1(x))}(\tau_1(x))    
%  + f^{k_{\tau_1}(x)} (\sigma_A^{l_{\tau_2}(\tau_1(x))}(\tau_1(x)) \\
%   &  -  f^{k_{\tau_2}(\tau_1(x))}({\tau_2 \circ\tau_1}(x))
%        - f^{k_{\tau_1}(x)} (\sigma_A^{l_{\tau_2}(\tau_1(x) )}(\tau_1(x) ) )  \\
%=& \rho^f(x,\tau_1)  + f^{l_{\tau_2}(\tau_1(x)) + k_{\tau_1}(x)}(\tau_1(x)) \\ 
%  & + f^{l_{\tau_2}(\tau_1(x))}(\tau_1(x)) -  f^{k_{\tau_2}(\tau_1(x))}({\tau_2 \circ\tau_1}(x)) \\
%  & -f^{l_{\tau_2}(\tau_1(x))}(\tau_1(x))
%        - f^{k_{\tau_1}(x)} (\sigma_A^{l_{\tau_2}(\tau_1(x))}(\tau_1(x) )) \\
%=& \rho^f(x,\tau_1)  + f^{l_{\tau_2}(\tau_1(x)) + k_{\tau_1}(x)}(\tau_1(x)) 
%   + \rho^f(\tau_1(x), \tau_2) \\
%  & -f^{l_{\tau_2}(\tau_1(x))}(\tau_1(x))
%        - f^{k_{\tau_1}(x)} (\sigma_A^{l_{\tau_2}(\tau_1(x))}(\tau_1(x)) \\
=& \rho^f(x,\tau_1)  + \rho^f(\tau_1(x), \tau_2).
\end{align*}
Hence $\rho^f$ satisfies the cocycle identity (i).

(ii) As $\rho^f(x,\id) =0,$ the desired identity follows from (i). 
%\hspace{6cm}\qed
%\renewcommand{\qed}{}
\end{proof}

\begin{definition}
The {\it cocycle full group for}\/ $f$ is defined by 
\begin{equation}\label{eq:GammaAf}
\Gamma_{A,f} = \{ \tau \in \Gamma_A \mid \rho^f(x,\tau) =0 \text{ for all } x \in X_A \}.
\end{equation}
\end{definition}

By Lemma \ref{lem:2.1}, we know the following fact.
\begin{proposition}\label{prop:group}
$\Gamma_{A,f}$ is a subgroup of $\Gamma_A$.
\end{proposition}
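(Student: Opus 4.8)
The plan is to verify the three subgroup axioms directly from the cocycle identities established in Lemma \ref{lem:2.1}, together with the obvious fact that $\rho^f(x,\id) = 0$ for all $x \in X_A$ (since one may take $k_{\id} = l_{\id} \equiv 0$, whence $f^{l_{\id}(x)}(x) - f^{k_{\id}(x)}(\id(x)) = 0$). This last observation already shows $\id \in \Gamma_{A,f}$, so the set is nonempty.

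First I would treat closure under composition. Given $\tau_1, \tau_2 \in \Gamma_{A,f}$, apply the cocycle identity Lemma \ref{lem:2.1}(i) to obtain, for every $x \in X_A$,
\begin{equation*}
\rho^f(x,\tau_2 \circ \tau_1) = \rho^f(x,\tau_1) + \rho^f(\tau_1(x),\tau_2) = 0 + 0 = 0,
\end{equation*}
the first summand vanishing because $\tau_1 \in \Gamma_{A,f}$ and the second because $\tau_2 \in \Gamma_{A,f}$ (applied at the point $\tau_1(x) \in X_A$). Hence $\tau_2 \circ \tau_1 \in \Gamma_{A,f}$.

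Next I would treat closure under inverses. Given $\tau \in \Gamma_{A,f}$, Lemma \ref{lem:2.1}(ii) gives $\rho^f(x,\tau^{-1}) = -\rho^f(x,\tau) = 0$ for all $x \in X_A$, so $\tau^{-1} \in \Gamma_{A,f}$. Since $\Gamma_{A,f}$ is a nonempty subset of the group $\Gamma_A$ closed under composition and inversion, it is a subgroup, completing the argument. There is no real obstacle here: the content is entirely carried by Lemma \ref{lem:2.1}, and the proof is a routine check of the subgroup axioms — the only point worth a word of care is remembering to evaluate the second cocycle term at $\tau_1(x)$ rather than at $x$.
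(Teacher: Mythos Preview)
Your proof is correct and takes essentially the same approach as the paper: the paper simply writes ``By Lemma \ref{lem:2.1}, we know the following fact,'' and your argument just makes this explicit by checking the subgroup axioms via the cocycle identities in Lemma \ref{lem:2.1}.
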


\medskip

{\bf 2. Cocycle algebras.}

Let $S_1,\dots, S_N$ be a family of generating partial isometries of $\OA$
satisfying the operator relations
$\sum_{j=1}^N S_j S_j^* =1,\, S_i^* S_i = \sum_{j=1}^N A(i,j) S_j S_j^*, i=1,\dots,N$.
For $f \in C(X_A,\Z)$, recall that the gauge action
$\rho^{A,f}$ with potential $f$ is defined by the automorphisms
$\rho^{A,f}_t, t \in \T$ of $\OA$ satisfying
$$
\rho^{A,f}_t(S_j) = \exp(2 \pi \sqrt{-1} f t) S_j, \qquad t \in \R/ \Z = \T, \, \, j=1,\dots,N.
$$
\begin{definition}
For $f \in C(X_A, \Z)$, define a $C^*$-subalgebra $\FAf$ of $\OA$
by  the fixed point subalgebra of $\OA$ under the action $\rho^{A,f}:$
\begin{equation}
\FAf = 
\{ X \in \OA \mid \rho^{A,f}_t(X) = X \text{ for all } t \in \T\}.
\end{equation}  
We call the $C^*$-algebra $\FAf$ the {\it cocycle algebra for}\/ $f$.
\end{definition}
It is easy to see that 
$\DA \subset \FAf$ for any $f \in C(X_A,\Z)$.

Let us denote by 
$U(\OA,\DA)$ the group of unitary normalizers of $\DA$ in $\OA$:
$$
U(\OA,\DA) = \{ u \in U(\OA) \mid u \DA u^* = \DA\}
$$
where 
$U(\OA)$ denotes the group of unitaries in $\OA$.
The following lemma proved in \cite{MMGGD} is basic in our further discussions,
where the commutative subalgebra $\DA$ is naturally identified with $C(X_A)$.
\begin{lemma}[{\cite{MMGGD}}] \label{lem:fullunitary}
For any $\tau \in \Gamma_A$, there exists 
a unitary $u_\tau \in U(\OA, \DA)$ and
a finite family of admissible words
$\mu(i), \nu(i) \in B_*(X_A), i=1,2,\dots,n$  
satisfying the following conditions:

(1) $u_\tau = \sum_{i=1}^n S_{\mu(i)}S_{\nu(i)}^*$ and
\begin{equation}\label{eq:utauab}
S_{\mu(i)}^* S_{\mu(i)}= S_{\nu(i)}^* S_{\nu(i)}
\quad \text{ and } \quad
\sum_{i=1}^n S_{\mu(i)}S_{\mu(i)}^* 
=\sum_{i=1}^n S_{\nu(i)}S_{\nu(i)}^* =1.
\end{equation}

(2) $u_\tau a u_\tau^* = a\circ \tau^{-1} $ for all $a \in \DA$.  

Conversely, for 
a finite family of admissible words
$\mu(i), \nu(i) \in B_*(X_A), i=1,2,\dots,n$  
satisfying the above conditions \eqref{eq:utauab},
the unitary defined by the formula (1)
 belongs to the unitary normalizer $U(\OA, \DA)$
and there uniquely exists a homeomorphism $\tau$ on $X_A$
belonging to $\Gamma_A$ and satisfying (2). 
\end{lemma}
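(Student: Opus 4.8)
The plan is to exploit that $X_A$ is compact and totally disconnected, so that for $\tau\in\Gamma_A$ the functions $k_\tau,l_\tau\in C(X_A,\Zp)$ are locally constant and, for each $j\in\N$, the $j$-th coordinate of $\tau(x)$ is a locally constant function of $x$ (being continuous into the finite alphabet). Combined with the orbit relation $\sigma_A^{k_\tau(x)}(\tau(x))=\sigma_A^{l_\tau(x)}(x)$, this should force $\tau$ to be, on a suitable finite partition of $X_A$ into cylinder sets, a \emph{word substitution}: on one piece it deletes the initial admissible word $\nu$ of a fixed length and inserts another admissible word $\mu$, leaving all later coordinates untouched. Since such a substitution is implemented in $\OA$ precisely by the partial isometry $S_\mu S_\nu^*$, the unitary $u_\tau$ will be the sum of these over the partition; conversely a family of words as in \eqref{eq:utauab} should reassemble into such a substitution.

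For the forward direction I would fix $z\in X_A$, put $k=k_\tau(z)$, $l=l_\tau(z)$, and use continuity of $\tau,k_\tau,l_\tau$ to produce an admissible word $\nu$ of some length $m\ge l$ with $z\in U_\nu$ on which $k_\tau\equiv k$, $l_\tau\equiv l$, and on which the first $k$ symbols of $\tau(w)$ are constant, say equal to a word $\eta$ of length $k$. The relation $\sigma_A^{k}(\tau(w))=\sigma_A^{l}(w)$ on $U_\nu$ then forces $\tau(w)=(\eta,\nu_{l+1},\dots,\nu_m,w_{m+1},w_{m+2},\dots)$, i.e.\ $\tau$ substitutes $\nu$ by $\mu:=(\eta,\nu_{l+1},\dots,\nu_m)$ and $\tau(U_\nu)=U_\mu$ with $|\mu|-|\nu|=k-l$. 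A compactness argument, then refining to pairwise disjoint cylinders, yields words $\mu(i),\nu(i)$, $i=1,\dots,n$, with $\{U_{\nu(i)}\}$ a partition of $X_A$; bijectivity of $\tau$ makes $\{U_{\mu(i)}\}$ a partition too, so $\sum_i S_{\nu(i)}S_{\nu(i)}^*=\sum_i S_{\mu(i)}S_{\mu(i)}^*=1$, while preservation of tails by the substitution equates the follower sets of the last symbols of $\nu(i)$ and $\mu(i)$, i.e.\ $S_{\mu(i)}^*S_{\mu(i)}=S_{\nu(i)}^*S_{\nu(i)}$. With $u_\tau=\sum_iS_{\mu(i)}S_{\nu(i)}^*$, disjointness of cylinders gives $S_{\nu(i)}^*S_{\nu(j)}=S_{\mu(i)}^*S_{\mu(j)}=0$ for $i\neq j$, whence $u_\tau^*u_\tau=u_\tau u_\tau^*=1$; and computing $u_\tau\chi_{U_\xi}u_\tau^*$ after refining the partition so that $U_\xi$ is a union of the $U_{\nu(i)}$ gives $\chi_{\tau(U_\xi)}=\chi_{U_\xi}\circ\tau^{-1}$, hence $u_\tau a u_\tau^*=a\circ\tau^{-1}$ for all $a\in\DA$ and $u_\tau\in U(\OA,\DA)$.

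For the converse, given words satisfying \eqref{eq:utauab} I would set $u=\sum_iS_{\mu(i)}S_{\nu(i)}^*$ and note that $\sum_iS_{\nu(i)}S_{\nu(i)}^*=1$ with each $S_{\nu(i)}S_{\nu(i)}^*=\chi_{U_{\nu(i)}}$ taking values in $\{0,1\}$ forces $\{U_{\nu(i)}\}$, and likewise $\{U_{\mu(i)}\}$, to partition $X_A$; hence $S_{\nu(i)}^*S_{\nu(j)}=0$ for $i\neq j$ and $u$ is unitary as above. Then I would \emph{define} $\tau$ by sending $x\in U_{\nu(i)}$ to $(\mu(i),x_{|\nu(i)|+1},x_{|\nu(i)|+2},\dots)$ — a point of $X_A$ because $S_{\mu(i)}^*S_{\mu(i)}=S_{\nu(i)}^*S_{\nu(i)}$ makes the junction admissible — verify that $\tau$ is a homeomorphism with inverse the substitutions $\mu(i)\mapsto\nu(i)$, and that $\tau\in\Gamma_A$ with $k_\tau\equiv|\mu(i)|$ and $l_\tau\equiv|\nu(i)|$ on $U_{\nu(i)}$, since there $\sigma_A^{|\mu(i)|}(\tau(x))=\sigma_A^{|\nu(i)|}(x)$. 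The same computation again yields $uau^*=a\circ\tau^{-1}$, so $u\in U(\OA,\DA)$, and $\tau$ is unique because $C(X_A)$ separates the points of $X_A$.

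The main obstacle I anticipate is the first step of the forward direction: arguing that, although a priori the length-$k_\tau(z)$ prefix of $\tau(w)$ could depend on arbitrarily many coordinates of $w$, one can pass to a single cylinder $U_\nu$ of length $m\ge l_\tau(z)$ on which that prefix is constant and on which the orbit relation turns $\tau$ into an honest word substitution with balance $|\mu|-|\nu|=k_\tau(z)-l_\tau(z)$. Once this local normal form is secured, the remaining verifications are routine Cuntz--Krieger bookkeeping.
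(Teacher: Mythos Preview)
The paper does not supply a proof of this lemma; it is quoted from \cite{MMGGD} and used as a black box. Your sketch is correct and is essentially the standard argument one finds there: use local constancy of $k_\tau,l_\tau$ and of the first few coordinates of $\tau$ to see that $\tau$ is a prefix substitution on each piece of a finite clopen partition, then read off $u_\tau=\sum_i S_{\mu(i)}S_{\nu(i)}^*$; the converse is the obvious reassembly.

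Two small comments. First, the step you flag as the ``main obstacle'' is not one: since $\tau$ is continuous and each coordinate map $X_A\to\{1,\dots,N\}$ is continuous into a discrete set, the prefix $(\tau(w)_1,\dots,\tau(w)_k)$ is automatically locally constant, so a suitable $U_\nu$ exists immediately. Second, your deduction that $\{U_{\mu(i)}\}$ is a partition is phrased a bit circularly (bijectivity of $\tau$ only gives that $\{\tau(U_{\nu(i)})\}$ partitions $X_A$, and a priori $\tau(U_{\nu(i)})\subseteq U_{\mu(i)}$); the clean fix is already implicit in your formula $\mu=(\eta,\nu_{l+1},\dots,\nu_m)$: taking $m>l$ makes the last symbol of $\mu$ equal to the last symbol of $\nu$, so $S_\mu^*S_\mu=S_\nu^*S_\nu$ and $\tau(U_\nu)=U_\mu$ follow at once, and then both \eqref{eq:utauab} and the partition property are immediate.
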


\begin{lemma}[{\cite[Lemma 3.1]{MaMZ}}] \label{lem:rhoafsmu}
For $\mu \in B_*(X_A)$, the following identity holds:
\begin{equation}
\rho^{A,f}_t(S_\mu) = \exp{(2\pi\sqrt{-1}f^{|\mu|} t)} S_\mu,  \label{eq:rhoafsmu}
\end{equation}
where $|\mu |$ stands for the length $m$ of the word $\mu = (\mu_1,\dots, \mu_m) \in B_m(X_A)$.
\end{lemma}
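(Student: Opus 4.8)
The plan is to prove the identity \eqref{eq:rhoafsmu} by induction on the length $m = |\mu|$ of $\mu = (\mu_1,\dots,\mu_m) \in B_m(X_A)$, the only nontrivial ingredient being the standard commutation rule between the generators $S_j$ and the diagonal subalgebra $\DA \cong C(X_A)$.

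First I would record that $S_j\, b = (b\circ\sigma_A)\, S_j$ in $\OA$ for every $b \in \DA$ and every $j = 1,\dots,N$; by linearity and continuity it suffices to check this on the generating projections $b = S_\nu S_\nu^* = \chi_{U_\nu}$ of $\DA$, where both sides reduce to $A(j,\nu_1)\, S_{j\nu} S_\nu^*$, using $S_k^* S_l = \delta_{k,l} S_k^* S_k$ and $S_\nu^*\big(\sum_k A(j,k) S_k S_k^*\big) = A(j,\nu_1) S_\nu^*$. (This relation is classical for Cuntz--Krieger algebras and could simply be quoted instead.) Iterating gives $S_\mu\, b = (b\circ\sigma_A^{|\mu|})\, S_\mu$ for all $\mu \in B_*(X_A)$; since each $\exp(2\pi\sqrt{-1} g t)$ with $g \in C(X_A,\Z)$ is a unitary lying in $\DA$, it may be moved past the $S_j$ by this rule.

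For the induction itself, the case $m = 1$ is the definition of $\rho^{A,f}_t$ together with $f^1 = f$. For the inductive step I would write $\mu = (\mu_1)\cdot\mu'$ with $\mu' = (\mu_2,\dots,\mu_m)\in B_{m-1}(X_A)$, so $S_\mu = S_{\mu_1} S_{\mu'}$, and then compute, using that $\rho^{A,f}_t$ is multiplicative, the case $m=1$, the induction hypothesis on $S_{\mu'}$, and the commutation rule to pass $\exp(2\pi\sqrt{-1} f^{m-1} t)\in\DA$ to the left of $S_{\mu_1}$:
\begin{align*}
\rho^{A,f}_t(S_\mu)
&= \rho^{A,f}_t(S_{\mu_1})\, \rho^{A,f}_t(S_{\mu'})
= \exp(2\pi\sqrt{-1} f t)\, S_{\mu_1}\, \exp(2\pi\sqrt{-1} f^{m-1} t)\, S_{\mu'} \\
&= \exp(2\pi\sqrt{-1} f t)\,\exp\big(2\pi\sqrt{-1}(f^{m-1}\circ\sigma_A) t\big)\, S_{\mu_1} S_{\mu'}
= \exp\big(2\pi\sqrt{-1}(f + f^{m-1}\circ\sigma_A) t\big)\, S_\mu .
\end{align*}
The additivity identity $f^{n+m} = f^m + f^n\circ\sigma_A^m$ recorded earlier, applied with exponents $1$ and $m-1$, gives $f + f^{m-1}\circ\sigma_A = f^m$, which closes the induction. (Alternatively one can avoid induction: expand $\rho^{A,f}_t(S_\mu) = \prod_{k=1}^m \exp(2\pi\sqrt{-1} f t)\, S_{\mu_k}$ and push every exponential to the far left, accumulating $\exp\big(2\pi\sqrt{-1}\, t\sum_{k=0}^{m-1} f\circ\sigma_A^k\big)\, S_\mu = \exp(2\pi\sqrt{-1} f^m t)\, S_\mu$.)

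I do not expect any genuine obstacle; the argument is routine. The only points I would be careful about are the verification of $S_j b = (b\circ\sigma_A) S_j$ and the (harmless) fact that the exponentials involved are unitaries in the abelian algebra $\DA$, hence commute with one another and with $S_j^* S_j$; the integer-valuedness of $f$ plays no role in the identity itself, being needed only so that $\exp(2\pi\sqrt{-1} f t)$ descends to a function of $t\in\T = \R/\Z$.
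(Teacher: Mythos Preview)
Your argument is correct. The induction on $|\mu|$ via the commutation relation $S_j\,b = (b\circ\sigma_A)\,S_j$ for $b\in\DA$, together with the additivity $f^m = f + f^{m-1}\circ\sigma_A$, is exactly the standard route and there are no gaps.

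As for comparison: the paper does not actually prove this lemma at all; it is quoted verbatim from \cite[Lemma 3.1]{MaMZ} and used without further argument. So your proposal supplies what the present paper omits. Your proof is the same as (or at least equivalent to) what one finds in the cited reference, so there is no methodological divergence to discuss---you have simply filled in a citation with its content.
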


Define 
$\varphi_\tau(f)\in C(X_A,\Z)$
by setting
\begin{equation*}
\varphi_\tau(f)(x) =  \rho^f(x,\tau),
\qquad x \in X_A.
\end{equation*}

\begin{lemma}\label{lem:rhoaftu}
Let $u \in U(\OA, \DA)$ be a unitary defined by 
$u = \sum_{i=1}^n S_{\mu(i)}S_{\nu(i)}^*$
 for some  admissible words
$\mu(i), \nu(i) \in B_*(X_A), i=1,2,\dots,n$  
satisfying the conditions \eqref{eq:utauab} in Lemma \ref{lem:fullunitary}. 
Let $\tau\in \Gamma_A$ be a homeomorphism defined by (2) in Lemma \ref{lem:fullunitary}.
Let $f \in C(X_A, \Z)$.
Then the following formula holds:
\begin{equation}
\rho^{A,f}_t(u) = \exp{(2 \pi \sqrt{-1} \varphi_{\tau^{-1}}(f) t)} u, \qquad t \in \T. \label{eq:rhoaftu}
\end{equation}
\end{lemma}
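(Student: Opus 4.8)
The plan is to compute $\rho^{A,f}_t(u)$ directly from the expression $u = \sum_{i=1}^n S_{\mu(i)} S_{\nu(i)}^*$ by applying the automorphism termwise and using Lemma \ref{lem:rhoafsmu}. Since $\rho^{A,f}_t$ is a $*$-homomorphism, we get
\begin{equation*}
\rho^{A,f}_t(u) = \sum_{i=1}^n \rho^{A,f}_t(S_{\mu(i)}) \rho^{A,f}_t(S_{\nu(i)})^*
= \sum_{i=1}^n \exp(2\pi\sqrt{-1} f^{|\mu(i)|} t)\, S_{\mu(i)} S_{\nu(i)}^* \exp(-2\pi\sqrt{-1} f^{|\nu(i)|} t),
\end{equation*}
where the scalars in $\DA$ (regarded as functions on $X_A$) must be moved past the partial isometries carefully. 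The key algebraic identity is that for $a \in \DA$ one has $S_\mu^* a S_\mu = (a \circ \sigma_A^{|\mu|}) S_\mu^* S_\mu$, equivalently $a S_\mu = S_\mu (a \circ \sigma_A^{|\mu|})$ restricted to the appropriate range projection; this lets me collect all the exponential factors on one side. The upshot is that the coefficient of $S_{\mu(i)} S_{\nu(i)}^*$ becomes $\exp(2\pi\sqrt{-1} t [\, f^{|\mu(i)|} - f^{|\nu(i)|}\circ(\text{something})\,])$, and I must identify this exponent, as a function on the support $U_{\mu(i)}$, with $\varphi_{\tau^{-1}}(f) = \rho^f(\,\cdot\,,\tau^{-1})$.

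The crucial step is therefore the bookkeeping that connects the word-length data of $u$ to the cocycle $\rho^f$. Here I would invoke Lemma \ref{lem:fullunitary}: the homeomorphism $\tau$ with $u a u^* = a\circ\tau^{-1}$ acts on the cylinder $U_{\nu(i)}$ by $\tau^{-1}$ sending it to $U_{\mu(i)}$, and on a point $x \in U_{\mu(i)}$ one has $\sigma_A^{|\nu(i)|}(\tau^{-1}(x)) = \sigma_A^{|\mu(i)|}(x)$, i.e. the pair $(|\nu(i)|, |\mu(i)|)$ serves as $(k_{\tau^{-1}}(x), l_{\tau^{-1}}(x))$ on that cylinder. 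Substituting into the identity $\rho^f(x,\tau^{-1}) = f^{l_{\tau^{-1}}(x)}(x) - f^{k_{\tau^{-1}}(x)}(\tau^{-1}(x))$ established just before Lemma \ref{lem:2.1}, the exponent on the $i$-th term is exactly $f^{|\mu(i)|}(x) - f^{|\nu(i)|}(\tau^{-1}(x)) = \rho^f(x, \tau^{-1}) = \varphi_{\tau^{-1}}(f)(x)$ for $x \in U_{\mu(i)}$. Since the projections $S_{\mu(i)} S_{\mu(i)}^*$ correspond to $\chi_{U_{\mu(i)}}$ and sum to $1$, assembling the terms gives $\rho^{A,f}_t(u) = \exp(2\pi\sqrt{-1}\, \varphi_{\tau^{-1}}(f)\, t)\, u$ as claimed.

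The main obstacle I anticipate is the commutation bookkeeping: one has to be scrupulous about whether an exponential factor $\exp(2\pi\sqrt{-1} f^{|\nu(i)|} t)$ sitting to the right of $S_{\nu(i)}^*$ should be pulled through to act as $f^{|\nu(i)|}\circ\sigma_A^{|\nu(i)|}$ or interpreted via the source/range projection relations in \eqref{eq:utauab}, and to match this consistently with the convention used in defining $\rho^f$ via $l_\tau, k_\tau$. It will help that, by Lemma \ref{lem:fullunitary}, $\varphi_{\tau^{-1}}(f)$ is a genuine element of $C(X_A,\Z)$ (well-defined independently of the chosen $l,k$, by the first Lemma of this section), so once the exponent is pinned down on each cylinder $U_{\mu(i)}$ the global formula follows by summing over the partition $\{U_{\mu(i)}\}_i$ of $X_A$. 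I would also double check the sign and the use of $\tau^{-1}$ versus $\tau$, since $u a u^* = a\circ\tau^{-1}$ forces the cocycle appearing in the gauge transformation to be that of $\tau^{-1}$, consistent with Lemma \ref{lem:2.1}(ii).
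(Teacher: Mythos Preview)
Your proposal is correct and follows essentially the same route as the paper's proof: apply Lemma~\ref{lem:rhoafsmu} termwise, identify the exponent on each cylinder $U_{\mu(i)}$ as $\varphi_{\tau^{-1}}(f)$ via the identification $|\mu(i)| = l_{\tau^{-1}}(x)$, $|\nu(i)| = k_{\tau^{-1}}(x)$, and sum over the partition. The one place where the paper is more explicit than your sketch is the commutation step you flag as the ``main obstacle'': rather than pulling the right-hand exponential through $S_{\nu(i)}^*$ via the shift, the paper inserts $S_{\nu(i)} S_{\mu(i)}^* S_{\mu(i)} S_{\nu(i)}^*$ to the right and recognizes $S_{\mu(i)} S_{\nu(i)}^* \exp(-2\pi\sqrt{-1} f^{|\nu(i)|} t)\, S_{\nu(i)} S_{\mu(i)}^*$ as $u\,(\cdot)\,u^*$, which by Lemma~\ref{lem:fullunitary}(2) converts $f^{|\nu(i)|}$ directly into $f^{|\nu(i)|}\circ\tau^{-1}$; this cleanly produces the exponent $f^{|\mu(i)|} - f^{|\nu(i)|}\circ\tau^{-1}$ on $U_{\mu(i)}$ without tracking the $\sigma_A$-action separately.
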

\begin{proof}
By Lemma \ref{lem:rhoafsmu}, 
%for $\mu(i), \nu(i) \in B_*(X_A)$, 
we have
\begin{align*}
\rho^{A,f}_t(S_{\mu(i)} S_{\nu(i)}^*) 
=&  \exp{(2\pi\sqrt{-1}f^{|\mu(i)|} t)} S_{\mu(i)} S_{\nu(i)}^* 
    \exp{(-2\pi\sqrt{-1}f^{|\nu(i)|} t)} \\
=&  \exp{(2\pi\sqrt{-1}f^{|\mu(i)|} t)} S_{\mu(i)} S_{\nu(i)}^* 
    \exp{(-2\pi\sqrt{-1}f^{|\nu(i)|} t)} S_{\nu(i)} S_{\mu(i)}^*
    S_{\mu(i)} S_{\nu(i)}^* \\
=&  \exp{(2\pi\sqrt{-1}f^{|\mu(i)|} t)} \, u \,  
    \exp{(-2\pi\sqrt{-1}f^{|\nu(i)|} t)} \, u^* \, 
    S_{\mu(i)} S_{\nu(i)}^* \\
=&  \exp{(2\pi\sqrt{-1}f^{|\mu(i)|} t)} 
    \exp{(-2\pi\sqrt{-1}f^{|\nu(i)|}\circ \tau^{-1} t)} 
    S_{\mu(i)} S_{\nu(i)}^* \\
=&  \exp{(2\pi\sqrt{-1} ( f^{|\mu(i)|}  - f^{|\nu(i)|}\circ \tau^{-1}) t)} 
    S_{\mu(i)} S_{\mu(i)}^* S_{\mu(i)} S_{\nu(i)}^*. \\
\end{align*}
Now for $x \in U_{\mu(i)}$, we have
$$
|\mu(i)| = k_\tau(x) = l_{\tau^{-1}}(x), \qquad
 |\nu(i)| = l_\tau(x) = k_{\tau^{-1}}(x). 
$$ 
Hence 
$$
f^{|\mu(i)|} (x) - f^{|\nu(i)|}\circ \tau^{-1}(x) =
f^{l_{\tau^{-1}}(x)} (x) - f^{k_{\tau^{-1}}(x)}\circ \tau^{-1}(x) 
= \varphi_{\tau^{-1}}(f)(x)
$$
so that 
\begin{equation*}
\rho^{A,f}_t(S_{\mu(i)} S_{\nu(i)}^*) 
=  \exp{(2\pi\sqrt{-1} \varphi_{\tau^{-1}}(f) t)} 
    S_{\mu(i)} S_{\mu(i)}^* S_{\mu(i)} S_{\nu(i)}^*. 
\end{equation*}
Therefore we obtain that
\begin{equation*}
\rho^{A,f}_t(u) 
=  \sum_{i=1}^n \rho^{A,f}_t(S_{\mu(i)} S_{\nu(i)}^*)  
%& \sum_{i=1}^N  \exp{(2\pi\sqrt{-1} (\varphi_{\tau^{-1}}(f)) t)} 
%    S_{\mu(i)} S_{\mu(i)}^* S_{\mu(i)} S_{\nu(i)}^*  \\
=  \exp{(2\pi\sqrt{-1} (\varphi_{\tau^{-1}}(f)) t)} u. 
\end{equation*}
\end{proof}
Hence we have the following proposition.
\begin{proposition}\label{prop:rhoafutau}
For $f \in C(X_A,\Z)$ and $\tau \in \Gamma_A$,
we have 
$\rho^{A,f}_t(u_\tau) = u_\tau$
if and only if 
$\tau \in \Gamma_{A,f}.
$
\end{proposition}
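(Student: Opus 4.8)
The plan is to read off the statement directly from Lemma \ref{lem:rhoaftu}. Given $\tau \in \Gamma_A$, fix a unitary normalizer $u_\tau = \sum_{i=1}^n S_{\mu(i)}S_{\nu(i)}^*$ of the form guaranteed by Lemma \ref{lem:fullunitary}, so that $u_\tau a u_\tau^* = a \circ \tau^{-1}$ for all $a \in \DA$. Lemma \ref{lem:rhoaftu} then gives
\begin{equation*}
\rho^{A,f}_t(u_\tau) = \exp(2\pi\sqrt{-1}\,\varphi_{\tau^{-1}}(f)\,t)\, u_\tau, \qquad t \in \T,
\end{equation*}
where $\varphi_{\tau^{-1}}(f)(x) = \rho^f(x,\tau^{-1})$. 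By Lemma \ref{lem:2.1}(ii) we have $\rho^f(x,\tau^{-1}) = -\rho^f(x,\tau)$, so $\varphi_{\tau^{-1}}(f) = -\varphi_\tau(f)$ as an element of $C(X_A,\Z) \subset \DA$.

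First I would argue the ``if'' direction: if $\tau \in \Gamma_{A,f}$, then by definition \eqref{eq:GammaAf} we have $\rho^f(x,\tau) = 0$ for all $x \in X_A$, hence $\varphi_\tau(f) \equiv 0$, hence $\varphi_{\tau^{-1}}(f) \equiv 0$, and the displayed formula collapses to $\rho^{A,f}_t(u_\tau) = u_\tau$ for all $t$. For the ``only if'' direction, suppose $\rho^{A,f}_t(u_\tau) = u_\tau$ for all $t \in \T$. Then $\exp(2\pi\sqrt{-1}\,\varphi_{\tau^{-1}}(f)\,t)\, u_\tau = u_\tau$, and multiplying on the right by $u_\tau^*$ (a unitary) yields $\exp(2\pi\sqrt{-1}\,\varphi_{\tau^{-1}}(f)\,t) = 1$ in $\DA \cong C(X_A)$ for all $t$. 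Since $\varphi_{\tau^{-1}}(f)$ is an integer-valued continuous function, evaluating at a point $x$ where $\varphi_{\tau^{-1}}(f)(x) = m \neq 0$ and choosing $t = \frac{1}{2m}$ gives $\exp(\pi\sqrt{-1}) = -1 \neq 1$, a contradiction; hence $\varphi_{\tau^{-1}}(f) \equiv 0$, so $\rho^f(x,\tau) = -\rho^f(x,\tau^{-1}) = 0$ for all $x$, i.e. $\tau \in \Gamma_{A,f}$.

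The only point requiring a word of care is that the conclusion $\rho^{A,f}_t(u_\tau) = u_\tau$ should be stated for the specific normalizing unitary $u_\tau$ produced by Lemma \ref{lem:fullunitary}; since any two such unitaries implementing the same $\tau$ differ by a unitary in $\DA$ (which is fixed by $\rho^{A,f}_t$, as $\DA \subset \FAf$), the property $\rho^{A,f}_t(u_\tau) = u_\tau$ is in fact independent of the choice, so there is no ambiguity. I do not anticipate any real obstacle here: the statement is a direct corollary of Lemma \ref{lem:rhoaftu} together with the cocycle identity, and the argument that an integer-valued continuous function whose exponential of all real multiples is trivial must vanish identically is elementary.
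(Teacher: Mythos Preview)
Your proposal is correct and follows essentially the same approach as the paper: both argue directly from the formula of Lemma~\ref{lem:rhoaftu} together with Lemma~\ref{lem:2.1}(ii), observing that $\tau\in\Gamma_{A,f}$ iff $\varphi_{\tau^{-1}}(f)\equiv 0$ iff the exponential prefactor in \eqref{eq:rhoaftu} is identically~$1$. Your write-up is simply more explicit about the two directions and about why $\exp(2\pi\sqrt{-1}\,m\,t)=1$ for all $t$ forces the integer $m$ to vanish, and you add a harmless remark on the independence of the choice of $u_\tau$.
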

\begin{proof}
By Lemma \ref{lem:2.1} (ii), we see that
$\varphi_\tau(f) = -\varphi_{\tau^{-1}}(f)$.
Hence we have
$\tau \in \Gamma_A$ belongs to 
$\Gamma_{A,f}$ if and only if 
$\varphi_{\tau}(f) =0$. 
By \eqref{eq:rhoaftu}, we get the assertion.
\end{proof}

%The identity
%$\alpha_{\tau_1\circ\tau_2} = \alpha_{\tau_1}\circ \alpha_{\tau_2}$
%follows from$u_{\tau_1\circ\tau_2} = u_{\tau_1}\circ u_{\tau_2}$
%for $\tau_1, \tau_2 \in \Gamma_{A,f}$.

Let us denote by $\Inn(\FAf,\DA)$ the group of inner automorphisms
of $\FAf$ keeping the subalgebra $\DA$ globally.
%Each element of $\Inn(\FAf,\DA)$ defines a homeomorphism on $X_A$. 
Let $U(\FAf,\DA)$ be the group of unitary normalizers of $\DA$ 
in $\FAf$:
\begin{equation*}
U(\FAf, \DA) = \{ u \in U(\FAf) \mid u \DA u^* = \DA\}. 
\end{equation*}
For $u \in U(\FAf, \DA)$, $\Ad(u)$ yields an element of $\Inn(\FAf, \DA)$
and defines a homeomorphism $\tau_u$ on $X_A$ satisfying
$\Ad(u)(a) = a \circ \tau_u^{-1}$ for $a \in \DA$.

\begin{theorem}\label{thm:main1}
%For $f \in C(X_A,\Z)$, let $U(\FAf,\DA)$
%be the unitary normalizer of $\DA$ in $\FAf$, that is,
%\begin{equation}
%U(\FAf,\DA) = \{ u \in U(\FAf) \mid u \DA u^* = \DA \}.
%\end{equation} 
There exist  short exaxt sequences 
\begin{align}
1\longrightarrow U(\DA)
 \overset{\id}{\longrightarrow}& \, \, U(\FAf,\DA)
 \overset{\Ad}{\longrightarrow} 
\Gamma_{A,f}
 \longrightarrow 1,\label{eq:exactFAf} \\
1\longrightarrow U(\DA)
 \overset{\Ad}{\longrightarrow}& \, \, \Inn(\FAf,\DA)
 \overset{}{\longrightarrow} 
\Gamma_{A,f}
 \longrightarrow 1.\label{eq:InnFAf} 
\end{align}
The latter exact sequence splits.
\end{theorem}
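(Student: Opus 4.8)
The plan is to establish the top sequence \eqref{eq:exactFAf} directly from Lemma \ref{lem:fullunitary} and Proposition \ref{prop:rhoafutau}, to deduce the bottom sequence \eqref{eq:InnFAf} from it once the centre of $\FAf$ is computed, and to split \eqref{eq:InnFAf} by means of the explicit normalizing unitaries of Lemma \ref{lem:fullunitary}. For \eqref{eq:exactFAf}: the inclusion $U(\DA)\hookrightarrow U(\FAf,\DA)$ is clear since $\DA\subseteq\FAf$. Given $u\in U(\FAf,\DA)\subseteq U(\OA,\DA)$, Lemma \ref{lem:fullunitary} attaches a homeomorphism $\tau_u\in\Gamma_A$ with $uau^*=a\circ\tau_u^{-1}$ for $a\in\DA$; since $u$ and the canonical unitary $u_{\tau_u}$ of that lemma induce the same automorphism of $\DA$, the product $u\,u_{\tau_u}^*$ lies in $\DA'\cap\OA=\DA$, hence $u_{\tau_u}\in\FAf$, and then Proposition \ref{prop:rhoafutau} forces $\tau_u\in\Gamma_{A,f}$. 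Thus $u\mapsto\tau_u$ is a homomorphism $U(\FAf,\DA)\to\Gamma_{A,f}$ with kernel $\{u: uau^*=a\text{ for all }a\in\DA\}=U(\DA'\cap\OA)=U(\DA)$, and it is onto because $\rho^{A,f}_t(u_\tau)=u_\tau$ for $\tau\in\Gamma_{A,f}$ by Proposition \ref{prop:rhoafutau}, so that $u_\tau\in U(\FAf,\DA)$; this is \eqref{eq:exactFAf}.

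The map $U(\FAf,\DA)\to\Inn(\FAf,\DA)$, $u\mapsto\Ad(u)|_{\FAf}$, is onto by definition and has kernel $U(Z(\FAf))$, so \eqref{eq:InnFAf} will follow from \eqref{eq:exactFAf} as soon as $Z(\FAf)=\C1$ is known --- exactness at $\Inn(\FAf,\DA)$ and at $\Gamma_{A,f}$ then being verified just as for \eqref{eq:exactFAf}, an inner automorphism fixing $\DA$ pointwise being implemented by a unitary in $\DA'\cap\OA=\DA$. For the centre: $\DA$ is maximal abelian in $\OA$ and contained in $\FAf$, so $Z(\FAf)\subseteq\DA'\cap\OA=\DA$; and since the normalizers of $\DA$ in $\FAf$ generate $\FAf$ (a Cartan-type property), $Z(\FAf)$ consists precisely of the continuous $\Gamma_{A,f}$-invariant functions on $X_A$. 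I would therefore prove that every such function is constant, by constructing, for prescribed $x_0,x_1\in X_A$, elements of $\Gamma_{A,f}$ which, through an approximation argument, pin an invariant $g$ to $g(x_0)=g(x_1)$; the irreducibility of $A$ is used here to extend admissible words by returning loops so that the potential sums $\rho^f$ remain $0$. This is the real obstacle: for non-constant $f$ the subgroupoid of the Deaconu--Renault groupoid determined by $f$ need not be minimal, so one cannot simply invoke simplicity of $\FAf$ (available when $f$ is constant, where $\FAf$ is $\OA$ or $\F_A$) and must instead check by hand that its orbit closures fill $X_A$ finely enough to kill every non-constant continuous invariant function.

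For the splitting, attach to each $\tau\in\Gamma_{A,f}$ the unitary $u_\tau=\sum_i S_{\mu(i)}S_{\nu(i)}^*\in U(\FAf,\DA)$ of Lemma \ref{lem:fullunitary}. A summand $S_\mu S_\nu^*$ obeying $S_\mu^*S_\mu=S_\nu^*S_\nu$ is unchanged by the refinement $S_\mu S_\nu^*=\sum_{j:\,A(e_\mu,j)=1}S_{\mu j}S_{\nu j}^*$, where $e_\mu$ is the last letter of $\mu$; since any two complete prefix codes have a common refinement obtained by iterating such steps, $u_\tau$ depends only on $\tau$. Using the orthogonality relation $S_i^*S_j=\delta_{ij}S_i^*S_i$ in $\OA$, one checks that after passing to a common refinement of the relevant prefix codes the product $u_{\tau_2}u_{\tau_1}$ is again a unitary of the special form of Lemma \ref{lem:fullunitary} implementing $\tau_2\circ\tau_1$, whence $u_{\tau_2}u_{\tau_1}=u_{\tau_2\circ\tau_1}$. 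Consequently $\tau\mapsto u_\tau$ is a group homomorphism $\Gamma_{A,f}\to U(\FAf,\DA)$, and $\tau\mapsto\Ad(u_\tau)|_{\FAf}$ is a splitting of \eqref{eq:InnFAf}.
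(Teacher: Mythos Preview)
Your argument for the first sequence \eqref{eq:exactFAf} is correct and essentially identical to the paper's: both use Lemma~\ref{lem:fullunitary} to write an arbitrary $u\in U(\FAf,\DA)$ as a $U(\DA)$-multiple of a canonical $u_{\tau_u}$, then invoke Lemma~\ref{lem:rhoaftu}/Proposition~\ref{prop:rhoafutau} to force $\tau_u\in\Gamma_{A,f}$; the kernel computation via $\DA'\cap\OA=\DA$ and surjectivity via $u_\tau$ are the same.

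The detour through $Z(\FAf)=\C1$ for \eqref{eq:InnFAf} is where you diverge, and it is both unnecessary and problematic. You yourself supply the only ingredient actually needed --- ``an inner automorphism fixing $\DA$ pointwise is implemented by a unitary in $\DA'\cap\OA=\DA$'' --- and this, together with surjectivity inherited from \eqref{eq:exactFAf}, already gives exactness at $\Inn(\FAf,\DA)$ and at $\Gamma_{A,f}$; the paper proceeds exactly so, asserting \eqref{eq:InnFAf} immediately after \eqref{eq:exactFAf} with no mention of the centre. Computing $Z(\FAf)$ does nothing for the middle or right terms, and even $Z(\FAf)=\C1$ would leave the kernel of $\Ad:U(\DA)\to\Inn(\FAf,\DA)$ equal to $\T$, so it would not repair injectivity on the left either --- but that same defect is already present in the sequence $1\to U(\DA)\overset{\Ad}{\to}\Inn(\OA,\DA)\to\Gamma_A\to1$ of \cite[Theorem~1.5]{MaPacific} on which the paper relies, and is handled by convention. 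More seriously, the orbit-density argument you sketch for $Z(\FAf)=\C1$ is, as you acknowledge, genuinely delicate for non-constant $f$; hanging the proof on it turns a one-line step into an open-ended one.

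For the splitting, the paper simply quotes that \eqref{eq:exactOA} splits \cite[Theorem~1.5]{MaPacific} and observes that the section restricts to $\Gamma_{A,f}\to\Inn(\FAf,\DA)$ because $u_\tau\in\FAf$ for $\tau\in\Gamma_{A,f}$ (Proposition~\ref{prop:rhoafutau}). Your direct verification that $\tau\mapsto u_\tau$ is well-defined and multiplicative is a correct, self-contained re-derivation of exactly that cited result; it buys independence from \cite{MaPacific} at the cost of reproving known material.
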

\begin{proof}
Take an arbitrary unitary $u\in U(\FAf,\DA)$.
By \cite[Theorem 1.2]{MaPacific} and \cite[Lemma 2.2]{MMGGD}, 
there exists a finite family of words
$\mu(i), \nu(i) \in B_*(X_A), i=1,2,\dots, n$ 
such that 
 $u_\tau = \sum_{i=1}^n S_{\mu(i)}S_{\nu(i)}^*$ 
and  \eqref{eq:utauab}.
%
%\hspace{1cm}  (a) $S_{\mu(i)}^* S_{\mu(i)}= S_{\nu(i)}^* S_{\nu(i)}.$
%
%\hspace{1cm}  (b) $\sum_{i=1}^n S_{\mu(i)}S_{\mu(i)}^* 
%=\sum_{i=1}^n S_{\nu(i)}S_{\nu(i)}^* =1.$
%
As in \cite{MaPacific}, 
$\Ad: U(\OA,\DA)\longrightarrow \Gamma_A$
gives rise to a surjective homomorphism so that 
there exists $\tau_u \in \Gamma_A$ satisfying 
$\Ad(u)(a) =a\circ\tau_u^{-1}$ for $a \in \DA$.
We will show that $\tau_u \in \Gamma_{A,f}$.
By the previous lemma, we know that 
$$
\rho^{A,f}_t(u) = \exp{(2\pi\sqrt{-1}\varphi_{\tau_u^{-1}}(f) t)} u
$$
Now by the hypothesis that $u \in U(\FAf, \DA)$, we see $u \in \FAf$ and
$
\rho^{A,f}_t(u) =u$.
This implies that 
 $\exp{(2\pi\sqrt{-1}\varphi_{\tau_u^{-1}}(f) t) } =1$ for all $t\in \T$
 and hence
 $\varphi_{\tau_u^{-1}}(f) =0$.
We thus have $\rho^f(x,\tau_u^{-1})=0$ for all $x \in X_A$
so that $\tau_u$ belongs to $\Gamma_{A,f}$.    
Hence the map
$\Ad: u\in U(\FAf, \DA) \longrightarrow \tau_u \in \Gamma_{A,f}$    
defines a homomorphism.
Now assume that $u \in U(\FAf,\DA)$ satisfies $\Ad(u) = \id$ in $\Gamma_{A,f}$. 
We then have $u a u^* =a $ for all $a \in \DA$.
Since $\DA$ is maximal abelian in $\OA$, 
the unitary $u$ belongs to $\DA$.
%We thus have a short exact sequence \eqref{eq:exactFAf}.
Since for $\tau \in \Gamma_{A,f}$, 
we have a unitary $u_\tau$ 
as in Lemma \ref{lem:fullunitary} 
together with Proposition \ref{prop:rhoafutau}
such that
$u_\tau \in \FAf$ and 
 $\Ad(u_\tau)(a) = a\circ \tau^{-1}$ for all $a \in \DA$.
Hence
$\Ad: u \in U(\FAf,\DA) \longrightarrow \tau_u \in \in \Gamma_{A,f}$
is surjective, so that we have short exact sequences
\eqref{eq:exactFAf} and \eqref{eq:InnFAf}.
There is a split short exact sequence 
\begin{equation}
1\longrightarrow U(\DA)
 \overset{\Ad}{\longrightarrow} \Inn(\OA,\DA)
 \overset{}{\longrightarrow} 
\Gamma_{A}
 \longrightarrow 1 \label{eq:exactOA} 
\end{equation}
(\cite[Theorem 1.5]{MaPacific}).
As the exact sequence \eqref{eq:InnFAf} is given by restricting 
$\Inn(\OA, \DA)$  to 
 $\Inn(\FAf, \DA)$ in \eqref{eq:exactOA}, 
the exact sequence \eqref{eq:InnFAf} splits.
\end{proof}

%%%%%%%%%%%%%%%%%%%%%%%%%%%%%%%%%%%%%%%%%%%%%%%%%%%%%%%%%%%%%%
\section{Continuous orbit equivalence and cocycle full groups}
%%%%%%%%%%%%%%%%%%%%%%%%%%%%%%%%%%%%%%%%%%%%%%%%%%%%%%%%%%%%%%%%%
%%%%%%%%%%%%%%%%%%%%%%%%%%%%%%%%%%%%%%%%%%%%%%%%

\begin{lemma} Let $A, B$ be irreducible, non permutation matrices with entries in $\{0,1\}$.
For $f \in C(X_A,\Z)$ and $g \in C(X_B,\Z)$, 
suppose that there exists an isomorphism 
$\Phi: \FAf\longrightarrow \FBg$ of $C^*$-algebras such that 
$\Phi(\DA) = \DB$.
Then there exists an isomorphism
$\Phi_\Gamma: \Gamma_{A,f} \longrightarrow \Gamma_{B,g}$ 
of groups. 
\end{lemma}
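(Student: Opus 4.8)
The plan is to exploit the intrinsic description of the cocycle full group furnished by Theorem \ref{thm:main1}: the pair $(\FAf,\DA)$ recovers $\Gamma_{A,f}$ as the quotient $\Inn(\FAf,\DA)/\Ad(U(\DA))$ through the split exact sequence \eqref{eq:InnFAf}, and likewise $\Gamma_{B,g} \cong \Inn(\FBg,\DB)/\Ad(U(\DB))$. First I would observe that conjugation by $\Phi$, that is $\alpha\mapsto\Phi\circ\alpha\circ\Phi^{-1}$, is an isomorphism $\Inn(\FAf,\DA)\to\Inn(\FBg,\DB)$: it is manifestly an isomorphism $\Inn(\FAf)\to\Inn(\FBg)$, and since $\Phi(\DA)=\DB$ it carries automorphisms of $\FAf$ fixing $\DA$ setwise to automorphisms of $\FBg$ fixing $\DB$ setwise. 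Moreover $\Phi$ restricts to a $C^*$-isomorphism $\DA\to\DB$ and hence maps $U(\DA)$ bijectively onto $U(\DB)$, so this conjugation isomorphism sends the normal subgroup $\Ad(U(\DA))$ onto $\Ad(U(\DB))$. It therefore descends to an isomorphism $\Phi_\Gamma:\Gamma_{A,f}\to\Gamma_{B,g}$ of the corresponding quotients.

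To make $\Phi_\Gamma$ concrete I would pass through the unitary normalizers. Since $\Phi$ is an isomorphism with $\Phi(\DA)=\DB$, it restricts to a group isomorphism $U(\FAf,\DA)\to U(\FBg,\DB)$, and on $\DA$ it induces a homeomorphism $h:X_A\to X_B$ with $\Phi(a)=a\circ h^{-1}$ for $a\in\DA$. Given $\tau\in\Gamma_{A,f}$, choose (by Lemma \ref{lem:fullunitary} and Proposition \ref{prop:rhoafutau}) a unitary $u_\tau\in U(\FAf,\DA)$ with $\Ad(u_\tau)(a)=a\circ\tau^{-1}$ for $a\in\DA$. Then $\Phi(u_\tau)\in U(\FBg,\DB)$, and for $b\in\DB$, writing $b=\Phi(a)$ with $a=b\circ h$, one computes $\Ad(\Phi(u_\tau))(b)=\Phi(u_\tau a u_\tau^*)=\Phi(a\circ\tau^{-1})=(a\circ\tau^{-1})\circ h^{-1}=b\circ(h\circ\tau\circ h^{-1})^{-1}$. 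By the surjection $\Ad: U(\FBg,\DB)\to\Gamma_{B,g}$ in \eqref{eq:exactFAf} for $B$, this exhibits $h\circ\tau\circ h^{-1}$ as an element of $\Gamma_{B,g}$, so we set $\Phi_\Gamma(\tau)=h\circ\tau\circ h^{-1}$. Two admissible choices of $u_\tau$ differ by a unitary in $U(\DA)$, whose $\Phi$-image lies in $U(\DB)$ and hence has trivial image in $\Gamma_{B,g}$; this shows $\Phi_\Gamma$ is well defined, while multiplicativity and bijectivity follow from the corresponding properties of $\Phi$ on the unitary normalizers together with the exact sequences, with the inverse of $\Phi_\Gamma$ produced by running the same construction for $\Phi^{-1}$.

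I expect the only delicate point — a bookkeeping matter rather than a genuine obstacle — to be checking that the surjection $U(\FAf,\DA)\to\Gamma_{A,f}$ is intertwined by $\Phi$ with $U(\FBg,\DB)\to\Gamma_{B,g}$ in a way that passes cleanly to the quotient, i.e. that $\Phi$ sends a normalizing unitary implementing $\tau$ to one implementing $h\circ\tau\circ h^{-1}$. This is exactly the identity $\Phi(u_\tau a u_\tau^*)=\Phi(a\circ\tau^{-1})$ unwound through $\Phi|_{\DA}$, and it uses nothing about the gauge actions $\rho^{A,f}$, $\rho^{B,g}$ beyond the hypothesis $\Phi(\FAf)=\FBg$ already built into the statement.
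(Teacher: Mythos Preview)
Your proposal is correct and follows essentially the same approach as the paper: both arguments use the short exact sequences of Theorem~\ref{thm:main1} and observe that $\Phi$ intertwines the two sequences (carrying $U(\DA)$ onto $U(\DB)$ and $U(\FAf,\DA)$ onto $U(\FBg,\DB)$), whence the induced map on quotients gives the desired isomorphism $\Phi_\Gamma$. Your write-up is simply more explicit, in particular computing the formula $\Phi_\Gamma(\tau)=h\circ\tau\circ h^{-1}$, whereas the paper records only the commutative diagram.
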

\begin{proof}
In the exact sequence \eqref{eq:exactFAf},
by letting $\Phi |_{U(\DA)}$ the restriction of $\Phi$ to $U(\DA)$,
 we have a commutative diagram at the left square below.
Hence we have a natural isomorphism 
$\Phi_\Gamma: \Gamma_{A,f}\longrightarrow \Gamma_{B,g}$ 
that makes the right square  below commutative:
\begin{equation*}
\begin{CD}
1 @ >>>   U(\DA) @>\id>> U(\FAf,\DA)
  @>\Ad>> \Gamma_{A,f} @>>>1 \\
@. @VV{\Phi |_{U(\DA)}}V
@VV{\Phi}V 
@VV{\Phi_{\Gamma}}V @. \\
1 @ >>>   U(\DB) @>\id>> U(\F_{B,g},\DB)
  @>\Ad>> \Gamma_{B,g} @>>>1. \\
\end{CD}
\end{equation*}
\end{proof}
Now suppose that 
$h:X_A\longrightarrow X_B$ is a homeomorphism 
that gives rise to a continuous orbit equivalence between 
$(X_A,\sigma_A)$ and $(X_B,\sigma_B)$
with functions $k_1, l_1\in C(X_A,\Zp)$
and
$k_2, l_2\in C(X_B,\Zp)$
satisfying
\eqref{eq:coe12}, respectively.
In \cite[(3.2)]{MaJOT2015}, the following  homomorphism
$\Psi_h: C(X_B, \Z)\longrightarrow C(X_A,\Z)$ was introduced: 
\begin{align*}
\Psi_h(g)(x) 
=&  
 \sum_{i=0}^{l_1(x)-1}g(\sigma_B^i(h(x))) 
 \sum_{i=0}^{k_1(x)-1}g(\sigma_B^i(h(\sigma_A(x)))) \\
(=&  
 g^{l_1(x)}(h(x)) - 
 g^{k_1(x)}(h(\sigma_A(x))))\quad \text{ for } g \in C(X_B,\Z), x \in X_A 
\end{align*}
that determines the behavior of the potentials of the gauge actions
under the continuous orbit equivalence in the following way.
%By \cite[Theorem 3.3]{MaMZ}, we know the following lemma.
\begin{lemma}[{\cite[Theorem 3.3]{MaMZ}}]
Let $h:X_A\longrightarrow X_B$ be a homeomorphism that gives rise to a continuous orbit equivalence between 
$(X_A,\sigma_A)$ and $(X_B,\sigma_B)$.
Then there exists an isomorphism $\OA\longrightarrow \OB$ of $C^*$-algebras 
such that $\Phi(\DA) = \DB$ and 
\begin{equation}
\Phi\circ \rho^{A,\Psi_h(g)}_t = \rho^{B,g}_t\circ \Phi, \qquad t \in \T, \, g\in C(X_B,\Z).
\end{equation}
\end{lemma}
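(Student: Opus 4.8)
The plan is to pass to the groupoid model of the Cuntz--Krieger algebras and to exhibit $\Phi$ as induced by a groupoid isomorphism that pulls back the $\Z$-valued cocycle defining $\rho^{B,g}$ to the one defining $\rho^{A,\Psi_h(g)}$. Concretely, identify $\OA$ with the groupoid $C^*$-algebra $C^*(G_A)$ of the Deaconu--Renault groupoid
\begin{equation*}
G_A = \{ (x,\, p-q,\, y) \in X_A \times \Z \times X_A \mid p, q \in \Zp,\ \sigma_A^p(x) = \sigma_A^q(y) \},
\end{equation*}
with unit space $X_A$, so that $\DA$ corresponds to $C(X_A) = C(G_A^{(0)})$ and a product $S_\mu S_\nu^*$ to the indicator function of the compact open bisection $\{ (x, |\mu|-|\nu|, y) : x \in U_\mu,\ y \in U_\nu,\ \sigma_A^{|\mu|}(x) = \sigma_A^{|\nu|}(y)\}$; similarly for $B$. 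For $f \in C(X_A,\Z)$, set $c^f_A(x, p-q, y) = f^p(x) - f^q(y)$. The identity $f^{p+m}(x) = f^p(x) + f^m(\sigma_A^p(x))$ recorded in Section 2 shows $c^f_A$ is well defined, continuous, and a groupoid homomorphism, and comparing with Lemma \ref{lem:rhoafsmu} shows that the $\T$-action on $C^*(G_A)$ dual to $c^f_A$ is carried to $\rho^{A,f}$ under $\OA \cong C^*(G_A)$ (and likewise for $B$). Finally, by the construction of the isomorphism in \cite{MaPacific} (see also \cite{MaJOT2015}, \cite{MMGGD}), the continuous orbit equivalence $h$, with its functions $k_1, l_1, k_2, l_2$ from \eqref{eq:coe12}, lifts to an isomorphism of \'etale groupoids $\varphi_h : G_A \to G_B$ restricting to $h$ on unit spaces; the induced isomorphism $\Phi := (\varphi_h)_* : \OA \to \OB$ then automatically satisfies $\Phi(\DA) = \DB$ with $\Phi(a) = a \circ h^{-1}$ on $\DA$.

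Since the pullback of a $\Z$-valued groupoid cocycle along a groupoid isomorphism corresponds precisely to intertwining the associated dual $\T$-actions, the lemma reduces to the single identity
\begin{equation}\label{eq:pullbackplan}
c^g_B \circ \varphi_h = c^{\Psi_h(g)}_A \qquad \text{on } G_A,\ g \in C(X_B,\Z).
\end{equation}
Both sides of \eqref{eq:pullbackplan} are continuous homomorphisms $G_A \to \Z$, and $G_A$ is generated as a topological groupoid by its unit space together with the clopen bisection $\{(x, 1, \sigma_A(x)) : x \in X_A\}$ implementing the shift; so it suffices to verify \eqref{eq:pullbackplan} on the elements $(x, 1, \sigma_A(x))$ (on units both sides vanish). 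By the first line of \eqref{eq:coe12}, $\varphi_h(x, 1, \sigma_A(x))$ is the element of $G_B$ attached to the relation $\sigma_B^{l_1(x)}(h(x)) = \sigma_B^{k_1(x)}(h(\sigma_A(x)))$, i.e. $\varphi_h(x, 1, \sigma_A(x)) = (h(x),\, l_1(x)-k_1(x),\, h(\sigma_A(x)))$; hence
\begin{equation*}
c^g_B\big(\varphi_h(x, 1, \sigma_A(x))\big) = g^{l_1(x)}(h(x)) - g^{k_1(x)}(h(\sigma_A(x))) = \Psi_h(g)(x) = c^{\Psi_h(g)}_A(x, 1, \sigma_A(x)),
\end{equation*}
the middle equality being exactly the definition of $\Psi_h$ recalled before the statement, and the last one holding since $\Psi_h(g)^1 = \Psi_h(g)$. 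This gives \eqref{eq:pullbackplan} on a generating bisection, hence on all of $G_A$.

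Granting \eqref{eq:pullbackplan}, $\Phi = (\varphi_h)_*$ is a $C^*$-isomorphism of $\OA$ onto $\OB$ with $\Phi(\DA) = \DB$, and by functoriality of the dual action it intertwines the $\T$-actions dual to $c^{\Psi_h(g)}_A$ and $c^g_B$, that is, $\Phi \circ \rho^{A,\Psi_h(g)}_t = \rho^{B,g}_t \circ \Phi$ for all $t \in \T$ and $g \in C(X_B,\Z)$, as asserted.

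The part I expect to carry the real weight is setting up the first paragraph rigorously: verifying that $\rho^{A,f}$ is indeed the dual of $c^f_A$ under $\OA \cong C^*(G_A)$, and, above all, establishing the explicit form of the groupoid lift $\varphi_h$ on general cylinders, which requires iterating the relation $\sigma_B^{l_1} \circ h \circ \sigma_A = \sigma_B^{k_1} \circ h$ together with its counterpart for $h^{-1}$ and tracking the telescoped sums; once this is in hand, the identification of the transported potential as exactly $\Psi_h(g)$ is the short computation on generators above. One can also bypass the groupoid language altogether: take for $\Phi$ the concrete isomorphism of \cite{MaPacific} built on the generators $S_j$ from the functions $k_1, l_1$, and check directly that $\rho^{B,g}_t(\Phi(S_j)) = \Phi(\exp(2\pi\sqrt{-1}\,\Psi_h(g)\,t)\,S_j)$ using Lemma \ref{lem:rhoafsmu}; this again reduces to the defining formula for $\Psi_h$.
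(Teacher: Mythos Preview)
The paper does not give its own proof of this lemma; it is simply quoted from \cite[Theorem~3.3]{MaMZ}, so there is no in-paper argument to compare against directly.

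Your groupoid argument is correct. The identification of $\OA$ with $C^*(G_A)$ carrying $\DA$ to $C(G_A^{(0)})$, the recognition of $\rho^{A,f}$ as the dual action of the cocycle $c^f_A(x,p-q,y)=f^p(x)-f^q(y)$, and the lift of a continuous orbit equivalence $h$ to an \'etale groupoid isomorphism $\varphi_h:G_A\to G_B$ restricting to $h$ on units are all established in the references you cite (\cite{MaPacific}, \cite{MMKyoto}, \cite{MMGGD}, \cite{Renault2008}). The reduction of the intertwining relation to the cocycle identity $c^g_B\circ\varphi_h=c^{\Psi_h(g)}_A$ is legitimate, and your verification on the generating bisection $\{(x,1,\sigma_A(x))\}$ is exactly the defining formula for $\Psi_h$; since both sides are groupoid homomorphisms and $G_A$ is generated (with inverses) by this bisection over the unit space, this suffices.

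The original proof in \cite{MaMZ} proceeds along the lines of your closing remark: it works with the explicit isomorphism $\Phi$ of \cite{MaPacific}, built from the cocycle data $k_1,l_1$ on the generators $S_j$, and checks $\rho^{B,g}_t(\Phi(S_j))=\Phi(e^{2\pi i\,\Psi_h(g)\,t}S_j)$ directly via the formula of Lemma~\ref{lem:rhoafsmu}. Both routes hinge on the same telescoping computation; your groupoid packaging isolates it more cleanly at the cost of importing the Deaconu--Renault machinery, while the operator-algebraic route in \cite{MaMZ} stays closer to the generators-and-relations presentation used throughout this paper.
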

Hence we obtain the following lemma by taking their fixed points under the actions
$\rho^{A,\Psi_h(g)}$ and $\rho^{B,g}$, respectively.
\begin{lemma}
Let $h:X_A\longrightarrow X_B$ be a homeomorphism that gives rise to a continuous orbit equivalence between 
$(X_A,\sigma_A)$ and $(X_B,\sigma_B)$.
Then there exists an isomorphism $\Phi:\OA\longrightarrow \OB$ of 
$C^*$-algebras such that 
$\Phi(\DA) = \DB$ and
\begin{equation}
\Phi(\F_{A,\Psi_h(g)}) = \F_{B,g}, \qquad \text{ for } g \in C(X_B,\Z).
\end{equation}
\end{lemma}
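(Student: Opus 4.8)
The plan is to invoke the preceding lemma, which already supplies a \emph{single} isomorphism $\Phi:\OA\longrightarrow\OB$ satisfying $\Phi(\DA)=\DB$ together with the intertwining relation
\begin{equation*}
\Phi\circ\rho^{A,\Psi_h(g)}_t = \rho^{B,g}_t\circ\Phi,\qquad t\in\T,\ g\in C(X_B,\Z).
\end{equation*}
The same $\Phi$ therefore intertwines all of the generalized gauge actions simultaneously, and the only thing left to do is to record the elementary fact that an isomorphism intertwining two circle actions maps the fixed-point algebra of the first onto that of the second.

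Carrying this out, I would fix $g\in C(X_B,\Z)$ and take $X\in\F_{A,\Psi_h(g)}$, so that $\rho^{A,\Psi_h(g)}_t(X)=X$ for all $t\in\T$. Applying $\Phi$ and using the intertwining relation gives $\rho^{B,g}_t(\Phi(X))=\Phi(\rho^{A,\Psi_h(g)}_t(X))=\Phi(X)$ for every $t$, hence $\Phi(X)\in\FBg$; this yields $\Phi(\F_{A,\Psi_h(g)})\subseteq\FBg$. For the reverse inclusion I would rewrite the intertwining relation as $\rho^{A,\Psi_h(g)}_t\circ\Phi^{-1}=\Phi^{-1}\circ\rho^{B,g}_t$ and repeat the argument for $\Phi^{-1}$: every $Y\in\FBg$ then satisfies $\rho^{A,\Psi_h(g)}_t(\Phi^{-1}(Y))=\Phi^{-1}(Y)$, so $\Phi^{-1}(Y)\in\F_{A,\Psi_h(g)}$. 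Combining the two inclusions gives $\Phi(\F_{A,\Psi_h(g)})=\FBg$, and since $\Phi(\DA)=\DB$ is already part of the preceding lemma, this $\Phi$ has all the asserted properties.

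There is essentially no obstacle in this step; its entire content is contained in the preceding lemma (hence ultimately in \cite[Theorem 3.3]{MaMZ}). The one point I would be careful to state explicitly is that the preceding lemma furnishes a single $\Phi$ valid uniformly over all $g\in C(X_B,\Z)$, so that $\Phi(\F_{A,\Psi_h(g)})=\FBg$ holds for this one fixed $\Phi$ and every $g$ at once — which is the form in which the lemma is used later in the paper.
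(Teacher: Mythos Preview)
Your proposal is correct and follows exactly the paper's approach: the paper simply says the lemma is obtained from the preceding one ``by taking their fixed points under the actions $\rho^{A,\Psi_h(g)}$ and $\rho^{B,g}$, respectively,'' and you have merely written out that one-line remark in full detail.
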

%\begin{proof}
%By \cite{MaMathZ},there exists an isomorphism $\Phi: \OA\longrightarrow \OB$ 
% of $C^*$-algebras such that 
% $\Phi(\DA) = \DB$ and $\Phi\circ \rho^{A,\Psi_h(f)}_t = \rho^{B,f}_t\circ \Phi, t \in \T$,
% proving the assertion. 
%\end{proof}
\begin{proposition}\label{prop:GammaUF}
Let $f \in C(X_A,\Z)$ and $g \in C(X_B,\Z)$.
The following assertions are equivalent:
\begin{enumerate}
\renewcommand{\theenumi}{\roman{enumi}}
\renewcommand{\labelenumi}{\textup{(\theenumi)}}
\item There exists a homeomorphism $h:X_A\longrightarrow X_B$ such that 
\begin{equation}
h\circ \Gamma_A \circ h^{-1} = \Gamma_B \quad \text{ and }\quad 
h\circ \Gamma_{A,f} \circ h^{-1} = \Gamma_{B,g}. \label{eq:4.121}
\end{equation}
\item There exists an isomorphism $\Phi:\OA\longrightarrow \OB$ of $C^*$-algebras
such that 
\begin{equation}
\Phi(\DA) = \DB \quad \text{ and } \quad
\Phi(U(\FAf, \DA)) = U(\FBg, \DB). \label{eq:4.12ii}
\end{equation}
\end{enumerate}
\end{proposition}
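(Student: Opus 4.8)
The plan is to reduce both implications to one description of the unitary normalizer $U(\FAf,\DA)$ in which $f$ enters only through the subgroup $\Gamma_{A,f}$. The first step is to show that, as subgroups of $U(\OA,\DA)$,
\begin{equation*}
U(\FAf,\DA)=\{\, u\in U(\OA,\DA)\mid \tau_u\in\Gamma_{A,f}\,\},
\end{equation*}
where for $u\in U(\OA,\DA)$ the homeomorphism $\tau_u\in\Gamma_A$ is the one with $\Ad(u)(a)=a\circ\tau_u^{-1}$ for $a\in\DA$ (recall from \cite{MaPacific} that $u\mapsto\tau_u$ is a surjection $U(\OA,\DA)\to\Gamma_A$ whose kernel is $U(\DA)$, since $\DA$ is maximal abelian in $\OA$). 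The inclusion $\subseteq$ is the surjectivity part of \eqref{eq:exactFAf}. For $\supseteq$, if $\tau_u=\tau\in\Gamma_{A,f}$, then by Lemma \ref{lem:fullunitary} and Proposition \ref{prop:rhoafutau} there is $u_\tau\in U(\FAf,\DA)$ with $\tau_{u_\tau}=\tau$; since $\tau_{uu_\tau^{*}}=\id$, the unitary $uu_\tau^{*}\in U(\OA,\DA)$ commutes with $\DA$, hence (as $\DA$ is maximal abelian in $\OA$) lies in $U(\DA)\subseteq U(\FAf,\DA)$, and therefore $u=(uu_\tau^{*})u_\tau\in U(\FAf,\DA)$. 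The same description holds for $U(\FBg,\DB)\subseteq U(\OB,\DB)$. The second step is the observation that any $C^{*}$-isomorphism $\Phi:\OA\to\OB$ with $\Phi(\DA)=\DB$ restricts to an isomorphism $U(\OA,\DA)\to U(\OB,\DB)$ and, with $h:X_A\to X_B$ the homeomorphism satisfying $\Phi(a)=a\circ h^{-1}$ on $\DA\cong C(X_A)$, obeys $\tau_{\Phi(u)}=h\circ\tau_u\circ h^{-1}$ for $u\in U(\OA,\DA)$; indeed, for $b=\Phi(a)\in\DB$, i.e.\ $a=b\circ h$, one computes $\Phi(u)\,b\,\Phi(u)^{*}=\Phi(a\circ\tau_u^{-1})=(a\circ\tau_u^{-1})\circ h^{-1}=b\circ(h\circ\tau_u\circ h^{-1})^{-1}$.

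For (ii) $\Rightarrow$ (i), take $\Phi$ as in (ii) and let $h$ be the associated homeomorphism. Since $u\mapsto\tau_u$ is onto $\Gamma_A$ on $U(\OA,\DA)$ and onto $\Gamma_B$ on $U(\OB,\DB)$ and $\Phi$ is a bijection between these normalizer groups, $h\circ\Gamma_A\circ h^{-1}=\{\tau_{\Phi(u)}\mid u\in U(\OA,\DA)\}=\Gamma_B$. Next, combining the first-step descriptions with $\Phi(U(\FAf,\DA))=U(\FBg,\DB)$, for $u\in U(\OA,\DA)$ one has the equivalences
\begin{equation*}
\tau_u\in\Gamma_{A,f}\iff u\in U(\FAf,\DA)\iff\Phi(u)\in U(\FBg,\DB)\iff h\circ\tau_u\circ h^{-1}\in\Gamma_{B,g};
\end{equation*}
letting $\tau_u$ range over all of $\Gamma_A$ gives $\Gamma_{A,f}=h^{-1}\circ\Gamma_{B,g}\circ h$, i.e.\ $h\circ\Gamma_{A,f}\circ h^{-1}=\Gamma_{B,g}$. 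Together these give (i).

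For (i) $\Rightarrow$ (ii), let $h$ be as in (i). As $\tau\mapsto h\circ\tau\circ h^{-1}$ is then a group isomorphism $\Gamma_A\to\Gamma_B$, \cite[Corollary 1.2]{MaIsrael2015} makes $(X_A,\sigma_A)$ and $(X_B,\sigma_B)$ continuous orbit equivalent, and by the reconstruction underlying that result (\cite{MaPacific}, \cite{MMGGD}, \cite{MatuiCrelle}) the equivalence is implemented by the given $h$; hence, by the construction in \cite[Theorem 3.3]{MaMZ} (cf.\ \cite{MaPacific}), there is a $C^{*}$-isomorphism $\Phi:\OA\to\OB$ with $\Phi(\DA)=\DB$ and $\Phi(a)=a\circ h^{-1}$ for $a\in\DA$. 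For this $\Phi$ one again has $\tau_{\Phi(u)}=h\circ\tau_u\circ h^{-1}$, so, using the first-step descriptions and the hypothesis $h\circ\Gamma_{A,f}\circ h^{-1}=\Gamma_{B,g}$, for $u\in U(\OA,\DA)$ one gets $\Phi(u)\in U(\FBg,\DB)\iff h\circ\tau_u\circ h^{-1}\in\Gamma_{B,g}\iff\tau_u\in\Gamma_{A,f}\iff u\in U(\FAf,\DA)$; hence $\Phi(U(\FAf,\DA))=U(\FBg,\DB)$, which is (ii).

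Most of the above is bookkeeping with the surjections $u\mapsto\tau_u$; the two genuine ingredients are the first-step description of $U(\FAf,\DA)$, which rests on Theorem \ref{thm:main1} and Proposition \ref{prop:rhoafutau} (writing $u=d\,u_\tau$ through a canonical lift $u_\tau$ that is fixed by $\rho^{A,f}$ exactly when $\tau\in\Gamma_{A,f}$), and, in (i) $\Rightarrow$ (ii), the input that the abstract relation $h\circ\Gamma_A\circ h^{-1}=\Gamma_B$ is already realized by a $C^{*}$-isomorphism carrying $\DA$ onto $\DB$ through that same $h$; the latter is the point I expect to require the most care.
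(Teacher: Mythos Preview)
Your proof is correct and follows essentially the same strategy as the paper: both directions rest on the same ingredients (Lemma~\ref{lem:fullunitary}, Proposition~\ref{prop:rhoafutau}, Theorem~\ref{thm:main1}, and the $h\leftrightarrow\Phi$ correspondence from \cite{MaPacific}). Your only real departure is organizational---you isolate the identity $U(\FAf,\DA)=\{u\in U(\OA,\DA)\mid \tau_u\in\Gamma_{A,f}\}$ up front, whereas the paper redoes the gauge-action computation $\rho^{B,g}_t(\Phi(u))=\exp(2\pi i\,\varphi_{h\circ\tau^{-1}\circ h^{-1}}(g)\,t)\Phi(u)$ inline in each direction (using the decomposition $\Phi(u)=v\cdot u_0$ with $v\in U(\DB)$ that you also use); your packaging is slightly cleaner but not a different route.
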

\begin{proof}
(i) $\Longrightarrow$ (ii):
Assume that there exists a homeomorphism $h:X_A\longrightarrow X_B$
satisfying \eqref{eq:4.121}. 
As in \cite{MaPacific}, the condition $h\circ \Gamma_A \circ h^{-1} = \Gamma_B$
implies that 
$(X_A,\sigma_A)$ and $(X_B,\sigma_B)$ are continuous orbit equivalent.
 Hence there exists an isomorphism $\Phi:\OA\longrightarrow \OB$ of $C^*$-algebras
such that $\Phi(\DA) = \DB$, so that 
$\Phi(U(\OA, \DA)) = U(\OB, \DB).$
It remains to show that $\Phi$ satisfies $\Phi(U(\FAf, \DA)) = U(\FBg, \DB).$ 
For $u \in U(\FAf, \DA)$, we have
$\rho^{A,f}_t(u) = u$ for all $t \in \T$.
Let $\tau = \Ad(u)$ be the homeomorphism on $X_A$ defined by the unitary $u$.
As in the proof of Theorem \ref{thm:main1}, we have 
$\rho^{A,f}_t(u) = \exp{(2 \pi \sqrt{-1} \varphi_{\tau^{-1}}(f) t )} u, t \in \T$,
so that 
$\varphi_{\tau^{-1}}(f) =0$
and hence 
$\rho^f(x,\tau^{-1})=0 $ for all $ x \in X_A$.
This shows that $\tau $ belongs to $\Gamma_{A,f}$.
By the assumption that  
$
h\circ \Gamma_{A,f} \circ h^{-1} = \Gamma_{B,g},
$
we have 
\begin{equation}
\rho^g(y, h\circ \tau \circ h^{-1}) =0 \quad
\text{  for all }
y \in X_B.
\label{eq:4.122}
\end{equation}
Since
$\tau^{-1} = \Ad(u^*)$ on $X_A$, we have
$a\circ \tau = u^* a u$ for all $a \in \DA$. 
As $\Phi(a) = a \circ h^{-1}, a \in \DA$
and
$\Phi(a\circ \tau) = \Phi(u)^* \Phi(a) \Phi(u)$,
we have 
$(a\circ \tau)\circ h^{-1} = \Ad(\Phi(u)^*)(a \circ h^{-1})$.
By putting $b = a\circ h^{-1} \in C(X_B, \Z)$,
we have 
$b \circ (h \circ \tau\circ h^{-1}) = \Ad(\Phi(u)^*)(b)$.
Hence we have
\begin{equation}
h \circ \tau\circ h^{-1} = \Ad(\Phi(u)) \label{eq:4.123}
\end{equation}
By \eqref{eq:4.122}
together with Lemma \ref{lem:2.1} (ii), 
we obtain that 
$\varphi_{h\circ \tau^{-1}\circ h^{-1}}(g) =0$.
Since 
$h\circ \tau^{-1}\circ h^{-1} \in \Gamma_B$,
the unitary $\Phi(u)$ may be written 
as $v\cdot u_0$ where $v \in U(\DA)$ and
$u_0$ is of the form as in Lemma \ref{lem:fullunitary}.
We thus have
 $$
 \rho^{B,g}_t(\Phi(u)) = \exp{(2\pi\sqrt{-1}\varphi_{h\circ \tau^{-1}\circ h^{-1}}(g) t)} \Phi(u) = \Phi(u)
 $$
 so that $\Phi(u) \in \FBg$.
As $\Phi(\DA) = \DB$, we have $\Phi(u)\in U(\FBg, \OB)$.

(ii) $\Longrightarrow$ (i):
Assume that there exists an isomorphism $\Phi:\OA\longrightarrow \OB$ of $C^*$-algebras
satisfying \eqref{eq:4.12ii}.
By \cite{MaPacific}, 
there exists a homeomorphism $h:X_A\longrightarrow X_B$ 
such that $\Phi(a) = a\circ h^{-1}$ and 
$h\circ \Gamma_A \circ h^{-1} = \Gamma_B$.
For $\tau \in \Gamma_{A,f}$, we have $\rho^f(x,\tau) =0.$
Put
$u_\tau = \sum_{i=1}^n S_{\mu(i)}S_{\nu(i)}^* \in U(\OA,\DA)$
the unitary defined by $\tau$ such that
 $\Ad(u_\tau)(a) = a \circ \tau^{-1}$ for $a \in \DA$.
As $\tau \in \Gamma_{A,f}$, one has $\varphi_{\tau^{-1}}(f)=0$. 
Since 
$\rho^{A,f}_t(u_\tau) = \exp{(2\pi\sqrt{-1}\varphi_{\tau^{-1}}(f) t)} u_\tau = u_\tau$,
one obtains $u_\tau \in \FAf$ .
By the hypothesis, one has 
\begin{equation}
\rho^{B,g}_t(\Phi(u_\tau)
) = \Phi(u_\tau). \label{eq:4.125}
\end{equation}
As in \eqref{eq:4.123},
one sees that 
$  \Ad(\Phi(u_\tau))(b) = b\circ (h\circ \tau \circ h^{-1}) $ 
for $b \in \DB$
so that 
\begin{equation}
\rho^{B,g}_t(\Phi(u_\tau)) = \exp{(2\pi\sqrt{-1}\varphi_{h\circ \tau^{-1}\circ h^{-1}}(f) t)}\Phi(u_\tau).
\label{eq:4.126}
\end{equation}
By \eqref{eq:4.125} and \eqref{eq:4.126}, we have
$\varphi_{h\circ \tau^{-1}\circ h^{-1}}(f) =0$.
This implies that 
$\rho^g(y, h \circ \tau\circ h^{-1}) = 0$ for all $y \in X_B$
so that 
$ h \circ \tau\circ h^{-1} \in \Gamma_{B,g}$.
This shows that 
$ h \circ \Gamma_{A,f} \circ h^{-1} \subset \Gamma_{B,g}$
and similarly we have
$ h^{-1} \circ \Gamma_{B,g} \circ h \subset \Gamma_{A,f}$
so that we conclude that 
$ h \circ \Gamma_{A,f} \circ h^{-1} = \Gamma_{B,g}$.
\end{proof}
\begin{remark}
(i) is equivalent to the condition that there exists a homeomorphism 
$h:X_A\longrightarrow X_B$ that gives rise to a continuous orbit equivalence between 
$(X_A,\sigma_A)$ and $(X_B,\sigma_B)$ and  satisfies  
$h\circ \Gamma_{A,f} \circ h^{-1} = \Gamma_{B,g}.$

(ii) If the assertion of (i) holds, then the equality
$f =\Psi_h(g)$ automatically holds.
\end{remark}
Let $h:X_A\longrightarrow X_B$ 
be a homeomorphism that gives rise to a continuous orbit equivalence between 
$(X_A,\sigma_A)$ and $(X_B,\sigma_B)$
with functions $k_1, l_1\in C(X_A,\Zp)$
and
$k_2, l_2\in C(X_B,\Zp)$
satisfying
\eqref{eq:coe12}, respectively.
For $\tau \in \Gamma_A$, we put
$\xi_h(\tau) = h \circ\tau\circ h^{-1}$.  
It has been proved that 
$ \xi_h(\Gamma_A) = \Gamma_B$ (\cite[Proposition 5.4]{MaPacific}).
In the proof of \cite[Proposition 5.4]{MaPacific}, 
we actually showed the following lemma.
\begin{lemma}\label{lem:htauh}
For $\tau \in \Gamma_A$, put
$ m = k_\tau(x), n = l_\tau(x)$. 
Then the equalities
$$
k_{\xi_h(\tau)} (h(x)) = l_1^m(\tau(x)) + k_1^n(x),
\qquad
l_{\xi_h(\tau)} (h(x)) = k_1^m(\tau(x)) + l_1^n(x),
\qquad 
x \in X_A
$$
hold.
This means that 
\begin{equation*}
\sigma_B^{k_{\xi_h(\tau)}(y)}(\xi_h(\tau)(y)) 
= \sigma_B^{l_{\xi_h(\tau)}(y)}(y), \qquad y \in X_B.
\end{equation*}
\end{lemma}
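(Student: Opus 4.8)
The plan is to first upgrade the one-step orbit-equivalence relation \eqref{eq:coe12} to an $n$-step version, and then to apply that $n$-step relation twice and splice the two instances together via the defining equation \eqref{eq:tau} of $\tau$.

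First I would prove, by induction on $n \in \Zp$, the iterated identity
\[
\sigma_B^{k_1^n(x)}(h(\sigma_A^n(x))) = \sigma_B^{l_1^n(x)}(h(x)), \qquad x \in X_A .
\]
The case $n=0$ is trivial. For the inductive step one writes $k_1^{n+1}(x) = k_1^n(x) + k_1(\sigma_A^n(x))$ and $l_1^{n+1}(x) = l_1^n(x) + l_1(\sigma_A^n(x))$, applies the first line of \eqref{eq:coe12} at the point $\sigma_A^n(x)$, uses that powers of $\sigma_B$ commute with one another in order to move $\sigma_B^{k_1^n(x)}$ inside, and then invokes the inductive hypothesis.

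Next, fix $\tau \in \Gamma_A$ and $x \in X_A$, and set $m = k_\tau(x)$, $n = l_\tau(x)$, so that $\sigma_A^m(\tau(x)) = \sigma_A^n(x) =: z$ by \eqref{eq:tau}. Applying the iterated identity to the point $\tau(x)$ with exponent $m$ gives $\sigma_B^{k_1^m(\tau(x))}(h(z)) = \sigma_B^{l_1^m(\tau(x))}(h(\tau(x)))$, and applying it to the point $x$ with exponent $n$ gives $\sigma_B^{k_1^n(x)}(h(z)) = \sigma_B^{l_1^n(x)}(h(x))$; the two equalities share the common point $h(z)$. Applying $\sigma_B^{k_1^n(x)}$ to the former and $\sigma_B^{k_1^m(\tau(x))}$ to the latter and comparing the now-equal left-hand sides yields
\[
\sigma_B^{l_1^m(\tau(x)) + k_1^n(x)}(h(\tau(x))) = \sigma_B^{k_1^m(\tau(x)) + l_1^n(x)}(h(x)) .
\]
Since $h(\tau(x)) = \xi_h(\tau)(h(x))$, writing $y = h(x)$ this says exactly that $\xi_h(\tau) \in \Gamma_B$ satisfies \eqref{eq:tau} with $k_{\xi_h(\tau)}(h(x)) = l_1^m(\tau(x)) + k_1^n(x)$ and $l_{\xi_h(\tau)}(h(x)) = k_1^m(\tau(x)) + l_1^n(x)$, which are the claimed formulas.

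Finally I would check that these formulas indeed define admissible functions. Since $k_\tau, l_\tau \in C(X_A,\Zp)$ are locally constant, on each clopen set where $k_\tau$ takes a constant value $m$ and $l_\tau$ a constant value $n$ the maps $x \mapsto l_1^m(\tau(x))$ and $x \mapsto k_1^n(x)$ are continuous, so $x \mapsto l_1^{k_\tau(x)}(\tau(x)) + k_1^{l_\tau(x)}(x)$ is a continuous $\Zp$-valued function on $X_A$, and precomposing with the homeomorphism $h^{-1}$ produces continuous $\Zp$-valued functions on $X_B$. The main obstacle is really only the careful bookkeeping in the inductive step and in the splicing (tracking the $\sigma_B$-exponents through the additivity $f^{n+m}(x)=f^m(x)+f^n(\sigma_A^m(x))$ and the commutativity of the shift powers), together with this last routine verification that the resulting exponent functions are continuous even though $m=k_\tau(x)$ and $n=l_\tau(x)$ themselves vary with $x$.
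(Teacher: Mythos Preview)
Your argument is correct and is exactly the approach the paper has in mind: the paper does not supply a proof here but simply records that the lemma was already established in the course of proving \cite[Proposition~5.4]{MaPacific}, and the iterated identity $\sigma_B^{k_1^n(x)}(h(\sigma_A^n(x))) = \sigma_B^{l_1^n(x)}(h(x))$ that you prove by induction is precisely \cite[Lemma~5.1]{MaPacific}, which the paper invokes explicitly in the very next proposition. Your splicing step via the common tail $z=\sigma_A^m(\tau(x))=\sigma_A^n(x)$ and your continuity check are likewise the standard verifications carried out in that reference, so there is nothing to add.
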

The following proposition is seen in \cite{MaPre2020c}.
We give a proof for the sake of completeness.
\begin{proposition}[{\cite[Proposition 4.4]{MaPre2020c}}]\label{prop:xihg}
Let $h:X_A\longrightarrow X_B$ 
be a homeomorphism that gives rise to a continuous orbit equivalence between 
$(X_A,\sigma_A)$ and $(X_B,\sigma_B)$.
Then there exists an isomorphism $\xi_h:\Gamma_A\longrightarrow \Gamma_B$
such that 
\begin{equation}
\xi_h(\Gamma_{A,\Psi_h(g)}) = \Gamma_{B,g}, \qquad \text{ for } g \in C(X_B,\Z),
\end{equation}
where 
$\xi_h:\Gamma_A\longrightarrow \Gamma_B$ is given by
$\xi_h(\tau) =h\circ \tau\circ h^{-1}, \tau \in \Gamma_A.$
\end{proposition}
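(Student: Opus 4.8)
The plan is to reduce everything to a single cocycle identity. Fix $g \in C(X_B,\Z)$ and write $f = \Psi_h(g)\in C(X_A,\Z)$. Since the conjugation map $\xi_h(\tau) = h\circ\tau\circ h^{-1}$ is already known to be an isomorphism of $\Gamma_A$ onto $\Gamma_B$ by \cite[Proposition 5.4]{MaPacific}, and since $\Gamma_{A,f}$ (resp.\ $\Gamma_{B,g}$) is precisely the set of elements of $\Gamma_A$ (resp.\ $\Gamma_B$) on which $\rho^f(\cdot,\cdot)$ (resp.\ $\rho^g(\cdot,\cdot)$) vanishes identically, it suffices to prove
\[
\rho^{f}(x,\tau) \;=\; \rho^{g}\bigl(h(x),\,\xi_h(\tau)\bigr), \qquad \tau\in\Gamma_A,\ x\in X_A .
\]
Granting this, $\rho^f(\cdot,\tau)\equiv 0$ exactly when $\rho^g(\cdot,\xi_h(\tau))\equiv 0$ (recall $h$ is a homeomorphism, so running over $x\in X_A$ is the same as running over $h(x)\in X_B$), i.e.\ $\tau\in\Gamma_{A,f}$ iff $\xi_h(\tau)\in\Gamma_{B,g}$; together with the bijectivity of $\xi_h\colon\Gamma_A\to\Gamma_B$ this gives $\xi_h(\Gamma_{A,f})=\Gamma_{B,g}$, as required.

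To prepare the computation I would first record two preliminary facts, each obtained by an easy induction on $n\in\Zp$. Writing $k_1^n(x)=\sum_{i=0}^{n-1}k_1(\sigma_A^i(x))$ and $l_1^n(x)=\sum_{i=0}^{n-1}l_1(\sigma_A^i(x))$, iterating the first equation of \eqref{eq:coe12} (using only that the iterates of $\sigma_B$ commute) yields
\[
\sigma_B^{k_1^n(x)}\bigl(h(\sigma_A^n(x))\bigr)=\sigma_B^{l_1^n(x)}\bigl(h(x)\bigr),\qquad x\in X_A .
\]
Combining this with the additive identity $g^{a+b}(y)=g^{b}(y)+g^{a}(\sigma_B^{b}(y))$ for $g$, a second induction gives the closed form
\[
\Psi_h(g)^{n}(x)=g^{l_1^n(x)}\bigl(h(x)\bigr)-g^{k_1^n(x)}\bigl(h(\sigma_A^n(x))\bigr),\qquad x\in X_A .
\]

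Now fix $\tau\in\Gamma_A$ and $x\in X_A$, and put $m=k_\tau(x)$, $n=l_\tau(x)$; by \eqref{eq:tau} the point $y_0:=h(\sigma_A^n(x))=h(\sigma_A^m(\tau(x)))$ is well defined. Lemma \ref{lem:htauh} gives $\xi_h(\tau)(h(x))=h(\tau(x))$ together with $k_{\xi_h(\tau)}(h(x))=l_1^m(\tau(x))+k_1^n(x)$ and $l_{\xi_h(\tau)}(h(x))=k_1^m(\tau(x))+l_1^n(x)$, so that
\[
\rho^{g}\bigl(h(x),\xi_h(\tau)\bigr)=g^{k_1^m(\tau(x))+l_1^n(x)}\bigl(h(x)\bigr)-g^{l_1^m(\tau(x))+k_1^n(x)}\bigl(h(\tau(x))\bigr).
\]
I would then expand each of the two terms by the additive identity for $g$, use the iterated orbit relation above to replace $\sigma_B^{l_1^n(x)}(h(x))$ by $\sigma_B^{k_1^n(x)}(y_0)$ and $\sigma_B^{l_1^m(\tau(x))}(h(\tau(x)))$ by $\sigma_B^{k_1^m(\tau(x))}(y_0)$, and finally apply the elementary relation $g^a(\sigma_B^b(y))-g^b(\sigma_B^a(y))=g^a(y)-g^b(y)$ (a two-line consequence of the additive identity) to the two resulting terms supported at $y_0$. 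The four terms then collapse to
\[
\bigl(g^{l_1^n(x)}(h(x))-g^{k_1^n(x)}(y_0)\bigr)-\bigl(g^{l_1^m(\tau(x))}(h(\tau(x)))-g^{k_1^m(\tau(x))}(y_0)\bigr),
\]
which by the closed form above is exactly $\Psi_h(g)^{l_\tau(x)}(x)-\Psi_h(g)^{k_\tau(x)}(\tau(x))=\rho^{f}(x,\tau)$, establishing the required identity.

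The two inductions and the passage from $\rho^g$ to $\rho^f$ are all routine manipulations with the cocycle identities; the only place that demands care is keeping the non-negative exponents in the order in which $g^{a+b}(y)=g^{b}(y)+g^{a}(\sigma_B^{b}(y))$ is applied, and making sure both occurrences of $y_0$ are identified correctly via \eqref{eq:tau} and the iterated orbit relation. I expect this bookkeeping — not any conceptual difficulty — to be the main obstacle. (Alternatively, one could deduce the statement from Proposition \ref{prop:GammaUF} applied to $f=\Psi_h(g)$, using the isomorphism $\Phi$ built from $h$ and the uniqueness of the underlying homeomorphism, but the direct argument above is self-contained given Lemma \ref{lem:htauh}.)
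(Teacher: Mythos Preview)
Your proposal is correct and follows essentially the same approach as the paper: both arguments establish the pointwise identity $\rho^{g}(h(x),\xi_h(\tau))=\rho^{\Psi_h(g)}(x,\tau)$ via Lemma~\ref{lem:htauh}, the additive relation for $g^{a+b}$, the iterated orbit relation, and the closed form for $\Psi_h(g)^n$. The only cosmetic differences are that you derive the iterated orbit relation and the closed form by induction (the paper cites \cite[Lemma~5.1]{MaPacific} and \cite[Lemma~4.5]{MaPAMS2016} for these) and that you isolate the relation $g^a(\sigma_B^b(y))-g^b(\sigma_B^a(y))=g^a(y)-g^b(y)$ as a named step, whereas the paper performs the same cancellation inline.
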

\begin{proof}
For a fixed $\tau \in \Gamma_A$, we will show the equality
\begin{equation}
\rho^g(h(x),\xi_h(\tau)) = \rho^{\Psi_h(g)}(x,\tau), \qquad x \in X_A.
\end{equation}
For $\tau \in X_A$,
put $m =k_\tau(x), n= l_\tau(x)$.
By using \cite[Lemma 5.1]{MaPacific} and Lemma \ref{lem:htauh},
we have 
\begin{align*}
\rho^g(h(x),\xi_h(\tau))
= & g^{l_{\xi_{h}(\tau)}(h(x))}(h(x))
   -g^{k_{\xi_{h}(\tau)}(h(x))}(\xi_{h}(\tau)(h(x))) \\
= & g^{k_1^m(\tau(x)) + l_1^n(x)}(h(x))
   -g^{k_1^n(x) + l_1^m(\tau(x))}(h(\tau(x))) \\
= & g^{l_1^n(x)}(h(x)) + g^{k_1^m(\tau(x))}(\sigma_B^{l_1^n(x)}(h(x))) \\
  &- \{ g^{ l_1^m(\tau(x))}(h(\tau(x))) 
       + g^{k_1^n(x)}(\sigma_B^{ l_1^m(\tau(x))}(h(\tau(x)))) \} \\
= & g^{l_1^n(x)}(h(x)) +g^{k_1^m(\tau(x))}(\sigma_B^{k_1^n(x)}(h(\sigma_A^n(x)))) \\
   & - \{ g^{ l_1^m(\tau(x))}(h(\tau(x))) 
       + g^{k_1^n(x)}(\sigma_B^{ k_1^m(\tau(x))}(h(\sigma_A^m(\tau(x))))) \} \\
= & \{ g^{l_1^n(x)}(h(x)) - g^{ l_1^m(\tau(x))}(h(\tau(x))) \} \\
   & + \{ g^{k_1^m(\tau(x))}(\sigma_B^{k_1^n(x)}(h(\sigma_A^n(x)))) 
        - g^{k_1^n(x)}(\sigma_B^{ k_1^m(\tau(x))}(h(\sigma_A^m(\tau(x))))) \}.
 \end{align*}
As
$\sigma_A^n(x) = \sigma_A^m(\tau(x))$,
the second $\{ \quad \}$ above goes to
\begin{align*}
& g^{k_1^m(\tau(x))}(\sigma_B^{k_1^n(x)}(h(\sigma_A^n(x)))) 
        - g^{k_1^n(x)}(\sigma_B^{ k_1^m(\tau(x))}(h(\sigma_A^m(\tau(x))))) \\
= & \{ g^{k_1^n(x)+ k_1^m(\tau(x))}(h(\sigma_A^n(x)))  - g^{k_1^n(x)}(h(\sigma_A^n(x))) \} \\
 & - \{ g^{k_1^n(x) + k_1^m(\tau(x))}(h(\sigma_A^m(\tau(x)))) -g^{k_1^m(\tau(x))}(h(\sigma_A^m(\tau(x))))\} \\
= & g^{k_1^m(\tau(x))}(h(\sigma_A^m(\tau(x)))) - g^{k_1^n(x)}(h(\sigma_A^n(x)))
\end{align*}
 so that we have 
 \begin{align*}
\rho^g(h(x),\xi_h(\tau))
= & \{ g^{l_1^n(x)}(h(x)) - g^{ l_1^m(\tau(x))}(h(\tau(x))) \} \\
   & + \{ g^{k_1^m(\tau(x))}(h(\sigma_A^m(\tau(x)))) - g^{k_1^n(x)}(h(\sigma_A^n(x))) \} \\
= & \{ g^{l_1^n(x)}(h(x)) - g^{k_1^n(x)}(h(\sigma_A^n(x)))   \} \\
   & - \{g^{ l_1^m(\tau(x))}(h(\tau(x))) - g^{k_1^m(\tau(x))}(h(\sigma_A^m(\tau(x)))) \}.
 \end{align*}
By \cite[Lemma 4.5]{MaPAMS2016}, we know
 \begin{align*}
 \varPsi_{h}(g)^{n}(x) 
= & g^{l_1^n(x)}(h(x)) - g^{k_1^n(x)}(h(\sigma_A^n(x))), \\
 \varPsi_{h}(g)^{m}(\tau(x))  
= & g^{l_1^m(\tau(x))}(h(\tau(x))) 
- g^{k_1^m(\tau(x))}(h(\sigma_A^m(\tau(x))))
 \end{align*}
so that  
 \begin{equation*}
\rho^{\varPsi_{h}(g)}(x,\tau)
= \varPsi_{h}(g)^{n}(x) - \varPsi_{h}(g)^{m}(\tau(x)) = \rho^{\varPsi_h(g)}(x,\tau). 
  \end{equation*}
%$\rho^g(h(x), \xi_h(\tau))
%= \rho^{\varPsi_h(g)}(x,\tau). \hspace{4cm} \qed$ 
%\renewcommand{\qed}{}
\end{proof}

%\newpage 

%%%%%%%%%%%%%%%%%%%%%%%%%%%%%%%%%%%%%%%%%%%%%%%%%%%%%%%%%
\section{Cocycle full groups and one-sided conjugacy} \label{sec:final}
%%%%%%%%%%%%%%%%%%%%%%%%%%%%%%%%%%%%%%%%%%%%%%%%%%%%%%%%%
In this section, we are assuming that 
there exists a homeomorphism $h:X_A\longrightarrow X_B$ that gives rise to 
a continuous orbit equivalence between $(X_A, \sigma_A)$ and $(X_B, \sigma_B)$.
\begin{lemma}\label{lem:cocyclegroup1}
Assume that 
\begin{equation}
\Gamma_{A, \Psi_h(g)} = \Gamma_{A, g\circ h}
\quad \text{ for all }
g \in C(X_B,\Z).
\end{equation}
Then we have 
\begin{equation} \label{eq:Agh}
\Gamma_{A, g\circ h - g \circ h \circ \sigma_A}
= \Gamma_{A, g\circ h - g \circ \sigma_B \circ h}
\quad \text{ for all }
g \in C(X_B,\Z).
\end{equation}
\end{lemma}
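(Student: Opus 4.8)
The plan is to deduce \eqref{eq:Agh} directly from the hypothesis by feeding the hypothesis the right argument. For a given $g \in C(X_B,\Z)$ I would consider $g' = g - g\circ\sigma_B$, which again lies in $C(X_B,\Z)$; the claim will be that $\Psi_h(g')$ and $g'\circ h$ are precisely the two potential functions occurring in \eqref{eq:Agh}, after which applying the identity $\Gamma_{A,\Psi_h(g')} = \Gamma_{A, g'\circ h}$ finishes the proof.

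First I would evaluate $\Psi_h(g')$. Recall $\Psi_h(g')(x) = (g')^{l_1(x)}(h(x)) - (g')^{k_1(x)}(h(\sigma_A(x)))$, where for $n\in\Zp$ and $w\in X_B$ we write $(g')^n(w) = \sum_{i=0}^{n-1}g'(\sigma_B^i(w))$. Since $g' = g - g\circ\sigma_B$, this sum telescopes, giving $(g')^n(w) = g(w) - g(\sigma_B^n(w))$ for every $n\in\Zp$. Hence
\begin{equation*}
\Psi_h(g')(x) = \{g(h(x)) - g(\sigma_B^{l_1(x)}(h(x)))\} - \{g(h(\sigma_A(x))) - g(\sigma_B^{k_1(x)}(h(\sigma_A(x))))\}.
\end{equation*}
The continuous orbit equivalence relation $\sigma_B^{k_1(x)}(h(\sigma_A(x))) = \sigma_B^{l_1(x)}(h(x))$ coming from the first line of \eqref{eq:coe12} forces the two interior terms to cancel, so
\begin{equation*}
\Psi_h(g - g\circ\sigma_B) = g\circ h - g\circ h\circ\sigma_A .
\end{equation*}

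Now I would apply the hypothesis with $g'$ in place of $g$. By the identity just obtained, $\Gamma_{A,\Psi_h(g')} = \Gamma_{A,\, g\circ h - g\circ h\circ\sigma_A}$, while $\Z$-linearity of $\Psi_h$ and of composition with $h$ gives $g'\circ h = g\circ h - g\circ\sigma_B\circ h$, hence $\Gamma_{A,\, g'\circ h} = \Gamma_{A,\, g\circ h - g\circ\sigma_B\circ h}$. Equating $\Gamma_{A,\Psi_h(g')}$ with $\Gamma_{A,\, g'\circ h}$ yields exactly \eqref{eq:Agh}, and since $g$ was arbitrary this holds for all $g \in C(X_B,\Z)$.

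The argument presents no serious obstacle: the only points requiring care are the choice of substitution $g' = g - g\circ\sigma_B$, the bookkeeping in the telescoping sum, and the observation that the cancellation of the interior terms is precisely an instance of the continuous orbit equivalence relation \eqref{eq:coe12} (not of the second identity there). If one prefers, the telescoping step can be packaged as the coboundary identity $\rho^{\,b - b\circ\sigma_A}(x,\tau) = b(x) - b(\tau(x))$, but this reformulation is not needed for the present lemma.
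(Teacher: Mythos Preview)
Your proof is correct and follows exactly the same route as the paper: substitute $g' = g - g\circ\sigma_B$ into the hypothesis and use the identity $\Psi_h(g - g\circ\sigma_B) = g\circ h - g\circ h\circ\sigma_A$. The only difference is that the paper cites this identity from \cite[Lemma 4.6]{MMETDS}, whereas you derive it directly via the telescoping computation and the orbit equivalence relation \eqref{eq:coe12}.
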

\begin{proof}
By \cite[Lemma 4.6]{MMETDS},
we have
$\Psi_h(g - g\circ \sigma_B) = g\circ h - g \circ h \circ \sigma_A$, 
proving \eqref{eq:Agh}.
\end{proof}
\begin{lemma}\label{lem:cocyclegroup2}
For $g \in C(X_B,\Z)$ and $\tau \in \Gamma_A$, we have 
\begin{enumerate}
\renewcommand{\theenumi}{\roman{enumi}}
\renewcommand{\labelenumi}{\textup{(\theenumi)}}
\item
$\tau \in \Gamma_{A, g\circ h - g \circ h \circ \sigma_A}$
if and only if $g(h(x)) = g(h(\tau(x)))$ for all $x \in X_A$.
\item
$\tau \in \Gamma_{A, g\circ h - g \circ \sigma_B \circ h}$
if and only if
\begin{align} 
& \sum_{i=0}^{l_\tau(x)}g(h(\sigma_A^i(x))) -
\sum_{i=0}^{l_\tau(x)}g(\sigma_B(h(\sigma_A^i(x)))) \notag \\
= &
\sum_{i=0}^{k_\tau(x)}g(h(\sigma_A^i(\tau(x)))) -
\sum_{i=0}^{k_\tau(x)}g(\sigma_B(h(\sigma_A^i(\tau(x))))) \label{eq:AgsB}
\end{align}
 holds for all $x \in X_A$.
\end{enumerate}
\end{lemma}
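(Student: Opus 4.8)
The plan is to substitute each of the two potential functions directly into the formula for $\rho^{(\cdot)}(x,\tau)$ and simplify, using throughout the defining relation $\sigma_A^{k_\tau(x)}(\tau(x)) = \sigma_A^{l_\tau(x)}(x)$ of an element $\tau\in\Gamma_A$.

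For (i), set $\phi = g\circ h \in C(X_A,\Z)$, so that the potential $g\circ h - g\circ h\circ\sigma_A$ is the coboundary $F = \phi - \phi\circ\sigma_A$. First I would record the telescoping identity $F^n(x) = \phi(x) - \phi(\sigma_A^n(x))$, valid for all $n\in\Zp$ and $x\in X_A$. Substituting into $\rho^F(x,\tau) = F^{l_\tau(x)}(x) - F^{k_\tau(x)}(\tau(x))$ yields
\[
\rho^F(x,\tau) = \bigl(\phi(x) - \phi(\sigma_A^{l_\tau(x)}(x))\bigr) - \bigl(\phi(\tau(x)) - \phi(\sigma_A^{k_\tau(x)}(\tau(x)))\bigr),
\]
and since $\sigma_A^{l_\tau(x)}(x) = \sigma_A^{k_\tau(x)}(\tau(x))$ by \eqref{eq:tau}, the two boundary terms cancel, leaving $\rho^F(x,\tau) = \phi(x) - \phi(\tau(x)) = g(h(x)) - g(h(\tau(x)))$. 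Hence $\tau\in\Gamma_{A,\,g\circ h - g\circ h\circ\sigma_A}$, i.e.\ $\rho^F(x,\tau)=0$ for every $x$, holds exactly when $g(h(x)) = g(h(\tau(x)))$ for every $x\in X_A$, which is (i).

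For (ii), the potential $F = g\circ h - g\circ\sigma_B\circ h$ satisfies $F(\sigma_A^i(x)) = g(h(\sigma_A^i(x))) - g(\sigma_B(h(\sigma_A^i(x))))$ for each $i\in\Zp$. Here I would use the form \eqref{eq:rhofxtau} of $\rho^F$ directly, namely $\rho^F(x,\tau) = \sum_{i=0}^{l_\tau(x)} F(\sigma_A^i(x)) - \sum_{i=0}^{k_\tau(x)} F(\sigma_A^i(\tau(x)))$. Expanding each summand splits these two sums into precisely the four sums appearing on the two sides of \eqref{eq:AgsB}, so that the condition $\rho^F(x,\tau)=0$ for all $x\in X_A$ is literally the identity \eqref{eq:AgsB} for all $x\in X_A$; thus (ii) is obtained just by unwinding the definition of $\Gamma_{A,F}$.

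Both parts are routine computations and present no substantial obstacle, only bookkeeping. The one place that genuinely uses the structure of $\Gamma_A$ is the cancellation of telescoped boundary terms in part (i) via $\sigma_A^{k_\tau(x)}(\tau(x)) = \sigma_A^{l_\tau(x)}(x)$; without it the intermediate expression would not collapse to a pointwise comparison of $g\circ h$ at $x$ and $\tau(x)$. Part (ii) requires no such step at all.
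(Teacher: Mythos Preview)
Your proof is correct and matches the paper's approach. For (i) you telescope via the compact form $\rho^F(x,\tau)=F^{l_\tau(x)}(x)-F^{k_\tau(x)}(\tau(x))$ while the paper writes out the full sums from \eqref{eq:rhofxtau} and telescopes those, but both collapse to $g(h(x))-g(h(\tau(x)))$ using $\sigma_A^{l_\tau(x)}(x)=\sigma_A^{k_\tau(x)}(\tau(x))$; for (ii) both you and the paper simply unwind the definition of $\rho^{g\circ h-g\circ\sigma_B\circ h}(x,\tau)=0$ into the four displayed sums.
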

\begin{proof}
It is straightforward to see that 
$\tau \in \Gamma_{A, g\circ h - g \circ h \circ \sigma_A}$
if and only if 
the equality 
 \begin{equation} \label{eq:AgsB2}
\sum_{i=0}^{l_\tau(x)}g(h(\sigma_A^i(x))) -
\sum_{i=0}^{l_\tau(x)}g(h(\sigma_A(\sigma_A^i(x))))
=
\sum_{i=0}^{k_\tau(x)}g(h(\sigma_A^i(\tau(x)))) -
\sum_{i=0}^{k_\tau(x)}g(h(\sigma_A(\sigma_A^i(\tau(x)))))
\end{equation}
 holds for all $x \in X_A$.
The above equality 
is nothing but the equality
$g(h(x)) = g(h(\tau(x)))$ for all $x \in X_A$.

(ii) 
Since
$\tau \in \Gamma_{A, g\circ h - g \circ \sigma_B \circ h}$
is equivalent to the equality
$\rho^{g\circ h - g \circ \sigma_B \circ h}(x, \tau) =0$ 
for all $x \in X_A$.
The latter is equivalent to 
the equality \eqref{eq:AgsB} for all $x \in X_A$.
\end{proof}

\begin{proposition}\label{prop:cocyclegroup}
Let $h:X_A\longrightarrow X_B$ be
a homeomorphism that gives rise to 
a continuous orbit equivalence between $(X_A, \sigma_A)$ and $(X_B, \sigma_B)$.
Assume that 
\begin{equation} \label{eq:GammaAP}
\Gamma_{A, \Psi_h(g)} = \Gamma_{A, g\circ h}
\quad \text{ for all }
g \in C(X_B,\Z).
\end{equation}
Then we have $h \circ \sigma_A = \sigma_B\circ h$.
This means that 
$(X_A, \sigma_A)$ and $(X_B, \sigma_B)$
are topologically conjugate.
\end{proposition}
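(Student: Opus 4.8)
The goal is to upgrade the continuous orbit equivalence $h$ to a genuine conjugacy, using only the hypothesis \eqref{eq:GammaAP}. By Lemma~\ref{lem:cocyclegroup1}, the hypothesis gives
\[
\Gamma_{A, g\circ h - g \circ h \circ \sigma_A}
= \Gamma_{A, g\circ h - g \circ \sigma_B \circ h}
\quad \text{ for all } g \in C(X_B,\Z),
\]
and Lemma~\ref{lem:cocyclegroup2} translates the two sides into concrete conditions on elements $\tau \in \Gamma_A$. The strategy is to \emph{pick}, for a point $x_0 \in X_A$ and a suitable $g$, an element $\tau \in \Gamma_A$ that lies in the left-hand group --- i.e.\ satisfies $g(h(x)) = g(h(\tau(x)))$ for all $x$ --- but which, \emph{unless} $h(\sigma_A(x_0)) = \sigma_B(h(x_0))$, violates the right-hand condition \eqref{eq:AgsB}. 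The resulting contradiction forces $h \circ \sigma_A = \sigma_B \circ h$ at $x_0$, and since $x_0$ is arbitrary we are done.

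\textbf{Key steps.} First I would fix $x_0 \in X_A$ and set $y_0 = h(x_0)$. Suppose for contradiction that $\sigma_B(y_0) \neq h(\sigma_A(x_0))$. Since $X_B$ is totally disconnected Hausdorff, choose a clopen neighbourhood $V$ of $\sigma_B(y_0)$ with $h(\sigma_A(x_0)) \notin V$, and let $g = \chi_V \in C(X_B,\Z)$ (a characteristic function of a cylinder-type clopen set, which is integer-valued continuous). The point of this choice is that $g$ "sees" the one-step image $\sigma_B(h(\sigma_A^i(x)))$ differently from $h(\sigma_A^{i+1}(x))$ near $x_0$. Second, I would produce a transposition-like $\tau \in \Gamma_A$ supported near $x_0$: using Lemma~\ref{lem:fullunitary}, one builds $\tau$ from a finite pair of admissible words so that $\tau$ swaps two small cylinder sets, one containing $x_0$, chosen fine enough that $g\circ h$ is constant on the relevant orbit segments --- guaranteeing $\tau \in \Gamma_{A, g\circ h - g\circ h\circ\sigma_A}$ via Lemma~\ref{lem:cocyclegroup2}(i) (the condition $g(h(x)) = g(h(\tau(x)))$). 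Third, I would compute the defect \eqref{eq:AgsB} for this $\tau$ at $x = x_0$: the terms $g(h(\sigma_A^i(x)))$ on the two sides of \eqref{eq:AgsB} cancel against the corresponding terms on $\tau(x_0)$ by the orbit relation $\sigma_A^{l_\tau}(x_0) = \sigma_A^{k_\tau}(\tau(x_0))$ and the constancy of $g\circ h$, leaving exactly a discrepancy coming from the $i=0$ (or boundary) term $g(\sigma_B(h(x_0))) - g(\sigma_B(h(\tau(x_0))))$ versus $g(h(\sigma_A(x_0)))$-type contributions. By the choice of $V$ this discrepancy is nonzero, contradicting membership in $\Gamma_{A, g\circ h - g\circ\sigma_B\circ h}$.

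\textbf{Main obstacle.} The delicate point is the third step: arranging $\tau$ so that \emph{all} the "interior" terms in the telescoping sums \eqref{eq:AgsB} cancel, isolating a single nonzero boundary discrepancy. This requires choosing the cylinder sets defining $\tau$ deep enough (length exceeding the values of $k_\tau, l_\tau$, $k_1, l_1$ on a neighbourhood of $x_0$, which are locally constant) that $g \circ h$ and $g \circ \sigma_B \circ h$ are constant along the finite orbit pieces $\sigma_A^i(x_0)$, $0 \le i \le l_\tau(x_0)$, and similarly along $\sigma_A^i(\tau(x_0))$ --- except at the very first step, where the failure $\sigma_B(h(x_0)) \neq h(\sigma_A(x_0))$ is detected. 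One must also check that such a $\tau$ genuinely exists in $\Gamma_A$ (irreducibility of $A$ and the construction in Lemma~\ref{lem:fullunitary} provide enough nontrivial normalizing unitaries supported on prescribed cylinders) and that it is nontrivial, i.e.\ actually moves $x_0$ to a point $\tau(x_0)$ whose image under $\sigma_B \circ h$ differs, so that the boundary term does not vanish accidentally. Once this combinatorial bookkeeping is done, the contradiction is immediate and the proposition follows.
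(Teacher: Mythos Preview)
Your strategy is the paper's: invoke Lemma~\ref{lem:cocyclegroup1} and Lemma~\ref{lem:cocyclegroup2}, assume failure at a point, pick $g$ to detect it, and produce a $\tau$ lying in one cocycle group but not the other. The gap is in your third step, which you yourself flag as the main obstacle. You propose a transposition-like $\tau$ supported on deep cylinders and hope that depth forces the interior terms in \eqref{eq:AgsB} to cancel; but making the supporting cylinder $U_\mu$ deep controls $g\circ h$ on $U_\mu$ itself, not the values $g(\sigma_B(h(\sigma_A^i(x_0))))$ for $1\le i\le l_\tau(x_0)$, since the points $\sigma_A^i(x_0)$ leave $U_\mu$ immediately. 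No mechanism in your sketch handles those terms, and for a transposition of equal-length cylinders (so $k_\tau=l_\tau=m$) there are $m$ of them on each side.

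The paper sidesteps this entirely by a very specific choice of $\tau_0$: it sets $\tau_0=\sigma_A$ on a cylinder $U_{z_1z_2}$ containing the failure point $z$ (with $\tau_0(x)=z_1x$ on $U_{z_2}$ and identity elsewhere), which gives $l_{\tau_0}(z)=1$, $k_{\tau_0}(z)=0$. The two sides of \eqref{eq:AgsB} then have only two terms and one term respectively, and the contradiction collapses to $2g(\sigma_B(h(\sigma_A(z))))=1$ with $g$ integer-valued --- no interior bookkeeping at all. To make this construction legitimate the paper first restricts to a non-eventually-periodic $z$ (ensuring $\sigma_B(h(z))\ne h(z)$ and $\sigma_A(z)\ne z$), passes to a higher block presentation so that $z_1\ne z_2$ and hence $U_{z_1z_2}\cap U_{z_2}=\emptyset$, and then chooses $g$ to vanish on $h(x)$ and $h(\sigma_A(x))$ for every $x\in U_{z_1z_2}$ while taking the value $1$ at $\sigma_B(h(z))$. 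These preparatory moves, absent from your outline, are exactly what make both Lemma~\ref{lem:cocyclegroup2}(i) and the one-line computation go through.
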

\begin{proof}
Assume that the equality \eqref{eq:GammaAP}
holds for all $g \in C(X_B,\Z)$.
By Lemma \ref{lem:cocyclegroup1},
the equality \eqref{eq:Agh} holds for all $g \in C(X_B,\Z)$.
Let $X_A^{\operatorname{nep}}$ 
be the set of non eventually periodic points of 
$\sigma_A$ in $X_A$.
That is,
\begin{equation*}
X_A^{\operatorname{nep}}
=\{ x \in X_A \mid \sigma_A^k(x) \ne \sigma_A^l(x) \text{ for all } k,l \in \Zp
\text{ with } k\ne l\}.
\end{equation*}
As the matrix $A$ is irreducible and not any permutation,
the set 
$X_A^{\operatorname{nep}}$ is dense in $X_A$.

Now assume that 
$h \circ \sigma_A \ne \sigma_B\circ h$.
Since 
$X_A^{\operatorname{nep}}$ is dense in $X_A$, 
one may find $z \in X_A^{\operatorname{nep}}$ such that 
\begin{equation} \label{eq:prop1}
h \circ \sigma_A(z) \ne \sigma_B\circ h (z).
\end{equation}
As the set of non eventually periodic points is
preserved by a homeomorphism giving rise to a continuous orbit 
equivalence (\cite[Proposition 3.5]{MMETDS}),
we have
\begin{align}
\sigma_A(z) & \ne z, \label{eq:prop2} \\
\sigma_B(h(z)) & \ne h(z). \label{eq:prop3} 
\end{align}
By taking higher block representation of
$(X_A,\sigma_A)$ (cf. \cite{LM}), 
we may assume that by \eqref{eq:prop2}
for the $z = (z_n)_{n\in \N}$, 
\begin{equation}
z_1 \ne z_2. \label{eq:prop3}
\end{equation}
Since
\begin{equation}
h(\sigma_A(z)) \ne \sigma_B(h(z)), \qquad 
h(z) \ne \sigma_B(h(z)), \label{eq:prop5}
\end{equation}
there exists a finite word $\mu =(z_1,\dots,z_m) \in B_m(X_A)$
such that 
\begin{equation*}
h(\sigma_A(x)) \ne \sigma_B(h(z)), \qquad 
h(x) \ne \sigma_B(h(z)) \qquad \text{ for all } x \in U_\mu.
\end{equation*}
By taking higher block representation of
$(X_A,\sigma_A)$, we may assume that 
$\mu = (z_1,z_2)\in B_2(X_A)$ and 
\begin{equation*}
z_1\ne z_2, \qquad h(\sigma_A(x)) \ne \sigma_B(h(z)), \qquad 
h(x) \ne \sigma_B(h(z)) \qquad \text{ for all } x \in U_{z_1 z_2}.
\end{equation*}
This means that 
\begin{equation*}
\sigma_B(h(z)) \not\in \{ h(\sigma_A(x)), h(x) \mid x \in U_{z_1 z_2} \}. 
\end{equation*}
Hence we may find $g \in C(X_B,\Z)$ such that 
\begin{align}
g(h(\sigma_A(x))) & = g(h(x)) = 0 \qquad \text{ for all } x \in U_{z_1 z_2}, \label{eq:4.12}\\ 
g(\sigma_B(h(z))) & =1. \label{eq:4.13}
\end{align}
By \cite[Lemma 3.2]{MaPacific}, there exists
$\tau_0 \in \Gamma_A$ such that for $x \in X_A$
\begin{equation*}
\tau_0(x) =
\begin{cases}
\sigma_A (x) \in U_{z_2}     & \text{ if } x \in U_{z_1 z_2},\\
z_1 x \in U_{z_1 z_2}               & \text{ if } x \in U_{z_2}, \\
x & \text{ otherwise.}
\end{cases}
\end{equation*}
We set
\begin{equation*}
k_{\tau_0}(x) =
\begin{cases}
0    & \text{ if } x \in U_{z_1 z_2}, \\
1  & \text{ if } x \in  U_{z_2}, \\
0 & \text{ otherwise,}
\end{cases}
\qquad
l_{\tau_0}(x) =
\begin{cases}
1    & \text{ if } x \in U_{z_1 z_2},\\ 
0  & \text{ if } x \in  U_{z_2},\\
0 & \text{ otherwise}
\end{cases}
\end{equation*}
so that
\begin{equation*}
\sigma_A^{k_{\tau_0}(x)}(\tau_0(x))
  = \sigma_A^{l_{\tau_0}(x)}(x)\qquad \text{ for } x \in X_A.
\end{equation*}
For $x \in X_A$, we have three cases.

Case 1: $x \in U_{z_1 z_2}$: 
We have $\tau_0(x) = \sigma_A(x)$. 
By \eqref{eq:4.12}, 
the equality $g(h(x)) = g(h(\tau_0(x))) =0$
holds.

Case 2: $x \in U_{z_2}$: 
We have $\tau_0(x) = z_1 x$. 
Put $x' = z_1 x$, 
so that  $x' \in U_{z_1 z_2}, \sigma_A(x') = x$.
By \eqref{eq:4.12}, 
the equality $g(h(x')) = g(h(\tau_0(x'))) =0$
holds, so that we have
$g(h(\tau_0(x))) = g(h(x)) =0.$

Case 3: $x \not\in U_{z_1 z_2}\cup U_{z_2}$: 
We have $\tau_0(x) = x$. 
By \eqref{eq:4.12}, 
the equality $g(h(\tau_0(x))) = g(h(x))$
holds.

Consequently we obtain that 
\begin{equation*}
g(h(x)) = g(h(\tau_0(x))) \qquad \text{ for all } x \in X_A
%\label{eq:prop8}
\end{equation*}
so that by Lemma \ref{lem:cocyclegroup2}, we have
\begin{equation*}
\tau_0 \in \Gamma_{A, g\circ h - g\circ h \circ \sigma_A}.
%\label{eq:prop9}
\end{equation*}
By the hypothesis $\Gamma_{A,\Psi_h(g)} = \Gamma_{A,g\circ h}$ 
with Lemma \ref{lem:cocyclegroup1}, we have 
\begin{equation*}
\tau_0 \in \Gamma_{A, g\circ h - g\circ \sigma_B \circ h}.
%\label{eq:prop10}
\end{equation*}
Since
$z \in U_{z_1 z_2}$, 
we know that 
$\tau_0(z) = \sigma_A(z), \, l_{\tau_0}(z) = 1,\, k_{\tau_0}(z) = 0.$
By \eqref{eq:4.12} and \eqref{eq:4.13},  we have 
\begin{align*}%\label{eq:prop11}
 & \sum_{i=0}^{l_{\tau_0}(z)} g(h(\sigma_A^i(z))) -
    \sum_{i=0}^{l_{\tau_0}(z)} g(\sigma_B(h(\sigma_A^i(z))))) \\
=&\{g(h(z))+ g(h(\sigma_A(z))) \} -\{ g(\sigma_B(h(z)))-  g(\sigma_B(h(\sigma_A(z))) ) \} \\
=& g(\sigma_B(h(\sigma_A(z))) ) -1
\end{align*}
and
\begin{align*}%\label{eq:prop11}
 & \sum_{i=0}^{k_{\tau_0}(z)} g(h(\sigma_A^i(\tau_0(z)))) -
    \sum_{i=0}^{k_{\tau_0}(z)} g(\sigma_B(h(\sigma_A^i(\tau_0(z))))) \\
=& g(h(\tau_0(z)))- g(\sigma_B(h(\tau_0(z)))) \\
=& g(h(\sigma_A(z))) -  g(\sigma_B(h(\sigma_A(z))) ) \\
=&  -  g(\sigma_B(h(\sigma_A(z))) ).
\end{align*}
By Lemma \ref{lem:cocyclegroup2} (ii), 
the equality \eqref{eq:AgsB} holds so that we have 
$
g(\sigma_B(h(\sigma_A(z))) ) -1 = -g(\sigma_B(h(\sigma_A(z))) ) 
$
a contradiction because 
$
g(\sigma_B(h(\sigma_A(z))) ) \in \Z. 
$
Consequently the hypothesis 
\eqref{eq:prop1} yields the contradiction so that we conclude
$h\circ \sigma_A = \sigma_B\circ h$.
\end{proof}

We provide a proposition which is seen in \cite{MaPre2020c}.
We give its proof for the sake of completeness.
\begin{proposition}[{\cite[Proposition 4.2]{MaPre2020c}}] \label{prop:uniqueh}
Assume that there exists an isomorphism
$\xi: \Gamma_A \longrightarrow \Gamma_B$ as abstract groups.
Then there exists a unique homeomorphism $h:X_A\longrightarrow X_B$ 
such that $\xi(\tau) = h\circ \tau\circ h^{-1}, \tau \in \Gamma_A.$  
\end{proposition}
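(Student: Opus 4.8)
**The plan is to recover the homeomorphism $h$ from the algebraic structure of the isomorphism $\xi$ by identifying points of $X_A$ with suitable subgroups of $\Gamma_A$.** The continuous full group $\Gamma_A$ acts on $X_A$ by homeomorphisms, and this action is known to be minimal and topologically free on the dense set $X_A^{\operatorname{nep}}$ of non-eventually-periodic points (indeed the associated groupoid is essentially principal and minimal, which is exactly what makes the reconstruction theory for \'etale groupoids applicable; see \cite{MatuiCrelle}, \cite{MatuiPLMS}). First I would show that for a non-eventually-periodic point $x \in X_A$, the stabilizer-type data --- more precisely, the collection of elements of $\Gamma_A$ that act trivially on a neighborhood of $x$, or equivalently the filter of clopen sets $U$ with $x \in U$ together with the subgroups $\Gamma_A[U] = \{\tau \in \Gamma_A \mid \tau|_{X_A \setminus U} = \mathrm{id}\}$ --- is definable purely in group-theoretic terms. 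This is the standard mechanism: from $\Gamma_A$ one reconstructs its Boolean algebra of clopen "supports" and hence the spectrum $X_A$ as a topological space, together with the action.

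**The key steps, in order.** (1) Recall/establish that $\Gamma_A$ contains, for every nonempty clopen $U \subsetneq X_A$ with $U$ not all of $X_A$, nontrivial elements supported in $U$ (e.g. the transpositions $\tau_0$ of the type constructed from \cite[Lemma 3.2]{MaPacific} used already in Proposition~\ref{prop:cocyclegroup}), so that the poset of subgroups $\Gamma_A[U]$ faithfully encodes the inclusion order of clopen sets; (2) characterize, among subgroups of $\Gamma_A$, those of the form $\Gamma_A[U]$ --- this is typically done by an internal condition such as being maximal among subgroups $H$ for which a complementary subgroup commuting with $H$ exists and is "large", a condition preserved under group isomorphism; (3) deduce that $\xi$ induces an order-isomorphism of the lattice of clopen subsets of $X_A$ onto that of $X_B$, hence (by Stone duality, since $X_A, X_B$ are totally disconnected compact Hausdorff) a homeomorphism $h : X_A \to X_B$ on the level of the clopen algebras; (4) verify that $h$ intertwines the actions, i.e. $\xi(\tau) \circ h = h \circ \tau$ as homeomorphisms, by noting that $\xi(\tau)$ must map the support data of $\tau$ around $x$ to the support data of $\xi(\tau)$ around $h(x)$; (5) prove uniqueness: if $h_1, h_2$ both satisfy $\xi(\tau) = h_i \circ \tau \circ h_i^{-1}$ for all $\tau$, then $h_2^{-1} \circ h_1$ commutes with the entire action of $\Gamma_A$, hence is the identity on the dense set $X_A^{\operatorname{nep}}$ by topological freeness, hence is the identity by continuity.

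**The main obstacle.** The delicate point is step (2): giving a purely group-theoretic (first-order, or at least isomorphism-invariant) characterization of the subgroups $\Gamma_A[U]$ inside $\Gamma_A$, so that $\xi$ is forced to carry them to the corresponding subgroups of $\Gamma_B$. This is where the non-amenability and the rich supply of elements with prescribed clopen support must be exploited; one typically uses a commutation/double-commutant argument (two elements of $\Gamma_A$ with disjoint supports commute, and conversely enough commuting pairs detect disjointness of supports) to reconstruct the Boolean algebra. Once that lattice isomorphism is in hand, Stone duality and the intertwining property are essentially formal, and the existence of nontrivial elements with small support --- already available from the constructions used earlier in this section --- supplies exactly the faithfulness needed. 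I would also remark that since $\xi$ automatically preserves the property "$k_\tau = l_\tau$" only after one knows $\xi$ comes from a homeomorphism, the reconstruction of $h$ must be done first and independently of any cocycle hypothesis, which is why this proposition is stated for an abstract group isomorphism with no extra structure assumed.
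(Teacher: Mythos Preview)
Your overall strategy is sound, but it diverges from the paper in both halves and contains one real gap.

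\textbf{Existence.} The paper does not reconstruct $h$ at all: it simply invokes \cite[Theorem 7.2]{MaIsrael2015}, which already supplies a homeomorphism $h$ with $\xi(\tau)=h\circ\tau\circ h^{-1}$. Your steps (1)--(4) are an outline of how one would \emph{prove} that cited theorem (a Rubin/Matui-style reconstruction via the Boolean algebra of clopen supports), and you correctly flag step~(2) as the hard part and leave it undone. As a proof of the proposition this is incomplete, though as an explanation of where the cited $h$ comes from it is accurate.

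\textbf{Uniqueness.} Here is the gap. You write that $h_0=h_2^{-1}\circ h_1$, commuting with all of $\Gamma_A$, must be the identity on $X_A^{\operatorname{nep}}$ ``by topological freeness''. But the action of $\Gamma_A$ on $X_A$ is not free anywhere: for every point $x$ the stabilizer in $\Gamma_A$ contains every element supported off a neighbourhood of $x$, so it is enormous. Essential principality of the groupoid $G_A$ is a statement about germs, not about stabilizers in the full group, and it does not by itself force $h_0(x)=x$. What is actually needed is the \emph{locally moving} property you already recorded in step~(1): given $h_0(x)=y\neq x$, choose disjoint clopen neighbourhoods, and produce $\tau_0\in\Gamma_A$ supported near $y$ with $\tau_0(y)\neq y$; then $\tau_0$ fixes $x$ while moving $y$, so $h_0\circ\tau_0\neq\tau_0\circ h_0$. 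This is exactly the argument the paper carries out (working directly on all of $X_A$, with no need to restrict to non-eventually-periodic points), using \cite[Lemma 2.1]{MaIsrael2015} to manufacture $\tau_0$. Your step~(1) supplies precisely the ingredient needed to repair step~(5); you just have to use it instead of appealing to freeness.
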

\begin{proof}
By \cite[Theorem 7.2]{MaIsrael2015},  
there exists a homeomorphism
$h:X_A\longrightarrow X_B$ 
such that $\xi(\tau) = h\circ \tau\circ h^{-1}, \tau \in \Gamma_A.$
It remains to show its uniqueness.
Suppose that there exists another  
homeomorphism
$h_1:X_A\longrightarrow X_B$ 
satisfying $\xi(\tau) = h_1\circ \tau\circ h_1^{-1}, \tau \in \Gamma_A.$
We put $h_0 = h^{-1}\circ h_1$ that is a homeomorphism 
on $X_A$ commuting  with all elements of $\Gamma_A$.
We are assuming that $h_0 \ne \id$, so that 
one may find admissible words $\mu, \nu \in B_*(X_A)$ 
such that their  cylinder sets $U_\mu, U_\nu \subset X_A$
are disjoint and  satisfy 
\begin{equation*}
%U_\mu \cap U_\nu = \emptyset, \qquad
h_0(U_\mu) \subset U_\nu, \qquad
U_\nu \backslash h_0(U_\mu) \ne \emptyset.
\end{equation*}
Put $U = h_0(U_\mu), Y = U_\nu \backslash h_0(U_\mu)$
that are open sets in $X_A$
and take $z \in U$.
By \cite[Lemma 2.1]{MaIsrael2015}, 
one may find a clopen set $V$ of $X_A$ and an element 
$\tau_0 \in \Gamma_A$
such that 
\begin{equation*}
z \in V \subset U, \qquad 
\tau_0(V) \subset Y, \qquad 
\tau_0|_{(V\cup\tau(V))^c} =\id.
\end{equation*}
Since  we have
$h_0^{-1}(z) \in U_\mu$ and
$V \cup \tau_0(V) \subset U_\nu$, we see that 
$
h_0^{-1}(z) \in (V\cup\tau_0(V))^c,
$
because 
$U_\nu \cap U_\mu = \emptyset$.
Hence we have $\tau_0(h_0^{-1}(z)) = h_0^{-1}(z)$, 
so that   
\begin{equation*}
(h_0\circ \tau_0)(h_0^{-1}(z))= z \in V \subset h_0(U_\mu).
\end{equation*}
On the other hand, we see that 
$
(\tau_0\circ h_0)(h_0^{-1}(z))= \tau_0(z) \in \tau_0(V) \subset U_\nu\backslash h_0(U_\mu).
$
We thus obtain that 
\begin{equation*}
(h_0\circ \tau_0)(h_0^{-1}(z))\ne (\tau_0\circ h_0)(h_0^{-1}(z))
\end{equation*}
so that  
$h_0\circ \tau_0 \ne \tau_0\circ h_0$, a contradiction.
We thus have $h_0 =\id$ and hence $h_1 = h$.
\end{proof}

\begin{theorem}\label{thm:main4}
Let $A, B$ be irreducible non permutation matrices with entries in $\{0,1\}$.
If there exists an isomorphism
$\xi: \Gamma_A\longrightarrow \Gamma_B$ of groups such that 
$\xi(\Gamma_{A,g\circ h}) =\Gamma_{B,g}$ for all $g \in C(X_B,\Z)$,
then $h\circ \sigma_A = \sigma_B\circ h$,
where $h:X_A\longrightarrow X_B$
is a unique homeomorphism satisfying 
$\xi(\tau) = h\circ \tau\circ h^{-1}$ for $\tau \in \Gamma_A$.
Hence 
$(X_A, \sigma_A)$ and $(X_B, \sigma_B)$
are topologically conjugate.
\end{theorem}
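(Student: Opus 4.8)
The plan is to reduce the statement to Proposition \ref{prop:cocyclegroup} by matching the two group-theoretic hypotheses. First I would invoke Proposition \ref{prop:uniqueh} to obtain the unique homeomorphism $h:X_A\longrightarrow X_B$ with $\xi(\tau)=h\circ\tau\circ h^{-1}$ for all $\tau\in\Gamma_A$; in particular $h\circ\Gamma_A\circ h^{-1}=\Gamma_B$, which by \cite{MaPacific} means that $h$ implements a continuous orbit equivalence between $(X_A,\sigma_A)$ and $(X_B,\sigma_B)$. Hence $h$ comes equipped with continuous functions $k_1,l_1\in C(X_A,\Zp)$ and $k_2,l_2\in C(X_B,\Zp)$ satisfying \eqref{eq:coe12}, and the homomorphism $\Psi_h:C(X_B,\Z)\longrightarrow C(X_A,\Z)$ is available.

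Next I would apply Proposition \ref{prop:xihg} to this same $h$: it yields $\xi_h(\Gamma_{A,\Psi_h(g)})=\Gamma_{B,g}$ for every $g\in C(X_B,\Z)$, where $\xi_h(\tau)=h\circ\tau\circ h^{-1}$. Since $\xi_h$ is exactly $\xi$ by the defining relation of $h$, this gives $\xi(\Gamma_{A,\Psi_h(g)})=\Gamma_{B,g}$ for all $g$. Comparing with the hypothesis $\xi(\Gamma_{A,g\circ h})=\Gamma_{B,g}$ and using that $\xi$ is a bijection, I conclude
\[
\Gamma_{A,\Psi_h(g)}=\Gamma_{A,g\circ h}\qquad\text{for all }g\in C(X_B,\Z).
\]

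This last equality is precisely hypothesis \eqref{eq:GammaAP} of Proposition \ref{prop:cocyclegroup}, which then gives $h\circ\sigma_A=\sigma_B\circ h$; that is, $(X_A,\sigma_A)$ and $(X_B,\sigma_B)$ are topologically conjugate, and the homeomorphism witnessing it is the $h$ produced from $\xi$. So the proof is largely a matter of assembling Propositions \ref{prop:uniqueh}, \ref{prop:xihg}, and \ref{prop:cocyclegroup}.

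The genuinely nontrivial content has already been isolated in Propositions \ref{prop:xihg} and \ref{prop:cocyclegroup}, so the main point requiring care here is the very first step: that an abstract group isomorphism $\xi:\Gamma_A\longrightarrow\Gamma_B$ is automatically spatial and that the implementing homeomorphism gives a continuous orbit equivalence, which rests on \cite[Theorem 7.2]{MaIsrael2015} together with \cite{MaPacific}. One must also use the uniqueness in Proposition \ref{prop:uniqueh} to ensure that the homeomorphism $h$ appearing in the hypothesis $\xi(\Gamma_{A,g\circ h})=\Gamma_{B,g}$ is identical to the $h$ used to form $\Psi_h$ and to which Proposition \ref{prop:xihg} is applied; otherwise the cancellation of $\Gamma_{B,g}$ on both sides would not be legitimate.
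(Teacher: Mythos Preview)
Your proposal is correct and follows essentially the same route as the paper's own proof: invoke Proposition~\ref{prop:xihg} (equivalently \cite[Proposition 4.4]{MaPre2020c}) to obtain $\xi(\Gamma_{A,\Psi_h(g)})=\Gamma_{B,g}$, compare with the hypothesis to deduce $\Gamma_{A,\Psi_h(g)}=\Gamma_{A,g\circ h}$, and then apply Proposition~\ref{prop:cocyclegroup}. Your write-up simply makes explicit the preliminary step through Proposition~\ref{prop:uniqueh} and \cite{MaPacific} that the paper's terse proof leaves implicit.
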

\begin{proof}
By \cite[Proposition 4.4]{MaPre2020c},
the identity
$\xi_h(\Gamma_{A,\Psi_h(g)}) = \Gamma_{B,g}$ holds for all
$g \in C(X_B,\Z)$.
By the hypothesis
$\xi(\Gamma_{A,g\circ h}) =\Gamma_{B,g}$, we have
$\Gamma_{A,\Psi_h(g)}=\Gamma_{A,g\circ h}.$
Therefore by Proposition \ref{prop:cocyclegroup}, we conclude that 
$h \circ \sigma_A = \sigma_B\circ h.$
\end{proof}

Now we may finally complete the proof of Theorem \ref{thm:mainthm}.

\medskip

{\it Proof of Theorem \ref{thm:mainthm}.}

The equivalence 
(i) $\Longleftrightarrow$ (ii) was proved in \cite{MaPre2020b}.

The implication 
(ii) $\Longrightarrow$ (iii) is obvious.

%(iii) $\Longrightarrow$ (iv):
Assume the condition (iii).
Hence we have 
$\Phi(U(\F_{A,g\circ h}))= U(\F_{B,g})$ for all $g \in C(X_B,\Z)$.
By Proposition \ref{prop:GammaUF} (ii) $\Longrightarrow$ (i),
we know that the assertion (iv) holds.

The implication (iv) $\Longrightarrow$ (i)
follows from Theorem \ref{thm:main4}.
%\end{proof}
\medskip

In \cite{MaPre2020d},
we will construct an \'etale groupoid $G_{A,f}$ such that 
$C^*(G_{A,f}) = \F_{A,f}$ and describe the condition (iii)
in terms of the groupoids.

\medskip

{\it Acknowledgment:}
This work was supported by JSPS KAKENHI 
Grant Numbers 19K03537.

\end{document}